\renewcommand{\ALG@name}{\sffamily\footnotesize Algorithm}
\theoremstyle{plain}
\newtheorem{theorem}{\sffamily Theorem}
\theoremstyle{definition}
\newtheorem{definition}{\sffamily Definition}
\theoremstyle{remark}
\newtheorem{remark}{\sffamily Remark}
\newcolumntype{C}[1]{>{\centering\arraybackslash}p{#1}}%GERAD
\pgfplotsset{compat=1.18}
\begin{document} 

\GDcoverpage

\begin{GDtitlepage}

\begin{GDauthlist}
	\GDauthitem{Abdelhakim Abdellaoui \ref{affil:bib}\GDrefsep\ref{affil:gerad}}
	\GDauthitem{Loubna Benabbou \ref{affil:gerad}\GDrefsep\ref{affil:hec}}
	\GDauthitem{Issmail El Hallaoui \ref{affil:gerad}\GDrefsep\ref{affil:bib}}
\end{GDauthlist}

\begin{GDaffillist}
	\GDaffilitem{affil:bib}{Polytechnique Montréal,, Département de Mathématiques et de Génie Industriel, Montr\'eal (Qc), Canada, QC H3T 1J4}
	\GDaffilitem{affil:gerad}{GERAD, Montr\'eal (Qc), Canada, H3T 1J4}
	\GDaffilitem{affil:hec}{UQAR L\'evis campus, Université du Québec à Rimouski, G6V 0A6}
\end{GDaffillist}

\begin{GDemaillist}
	\GDemailitem{abdelhakim.abdellaoui@polymtl.ca}
	\GDemailitem{loubna\_benabbou@uqar.ca}
	\GDemailitem{issmail.elhallaoui@polymtl.ca}
\end{GDemaillist}

\end{GDtitlepage}

%% %%%%%%%%%%%%%%%%%%%%%%%%%%%%%%%%%%%%%%%%%%%%%%%%%%%%%%%
%% %%%%%%%%% Résumés, mots-clés, remerciements %%%%%%%%%%%
%% %%%%%%% Abstract, keywords, acknowledgements %%%%%%%%%%
%% %%%%%%%%%%%%%%%%%%%%%%%%%%%%%%%%%%%%%%%%%%%%%%%%%%%%%%%

\GDabstracts

\begin{GDabstract}{Abstract}
Efficiently solving a vehicle routing problem ($\mathcal{VRP}$) in a practical runtime is a critical challenge for delivery management companies. This paper explores both a theoretical and experimental connection between the Capacitated Vehicle Routing Problem  ($\mathcal{CVRP}$) and the Constrained Centroid-Based Clustering ($\mathcal{CCBC}$). Reducing a $\mathcal{CVRP}$ to a $\mathcal{CCBC}$ is a synonym for a transition from an exponential to a polynomial complexity using commonly known algorithms for clustering, i.e \textit{K-means}. At the beginning, we conduct an exploratory analysis to highlight the existence of such a relationship between the two problems through illustrative small-size examples and simultaneously deduce some mathematically-related formulations and properties. On a second level,  the paper proposes a $\mathcal{CCBC}$ based approach endowed with some enhancements. The proposed framework consists of three stages. At the first step, a constrained centroid-based clustering algorithm generates feasible clusters of customers. This methodology incorporates three enhancement tools to achieve near-optimal clusters, namely: a multi-start procedure for initial centroids, a customer assignment metric, and a self-adjustment mechanism for choosing the number of clusters. At the second step, a traveling salesman problem ($\mathcal{TSP}$) solver is used to optimize the order of customers within each cluster. Finally, we introduce a process relying on routes cutting and relinking procedure, which calls upon solving a linear and integer programming model to further improve the obtained routes. This step is inspired by the \textit{ruin \& recreate} algorithm. This approach is an extension of the classical \textit{cluster-first, route-second} method and provides near-optimal solutions on well-known benchmark instances in terms of solution quality and computational runtime, offering a milestone in solving $\mathcal{VRP}$.

\paragraph{Keywords : }
Capacitated vehicle routing problem, Constrained centroid-based clustering
\end{GDabstract}

%\begin{GDabstract}{R\'esum\'e}
%R\'edigez votre r\'esum\'e ici...
%
%\paragraph{Mots cl\'es\,: }
%Here
%
%\end{GDabstract} 
%
%\begin{GDacknowledgements}
%Here
%
%\end{GDacknowledgements}

%% %%%%%%%%%%%%%%%%%%%%%%%%%%%%%%%%%%%%%%%%%%%%%%%%%
%% %%%%%%%%%%%%%%%% Article %%%%%%%%%%%%%%%%%%%%%%%%
%% %%%%%%%%%%%%%%%%%%%%%%%%%%%%%%%%%%%%%%%%%%%%%%%%%

\GDarticlestart
\section{Introduction}
The vehicle routing problem ($\mathcal{VRP}$) is defined as the exercise of finding the best vehicle routes to deliver products to a set of customers \cite{bertazzi2015min}. In practice, it often comes with different constraints reflecting the business nature. Recognizing its critical importance in various domains, it has been extensively studied by the operations research community over the past few decades. In particular, several approaches were designed to solve this problem, known as $\mathcal{NP}$-hard. Hence, it is always worthwhile to design new methodologies to achieve more efficient solutions within a practical timeframe. From this standpoint, the machine learning community has recently been  more involved in tackling this problem. Consequently, many techniques have been tested while building $\mathcal{VRP}$  solvers: clustering \cite{crainic2008clustering}, reinforcement learning \cite{nazari2018reinforcement}, and learning over graphs \cite{khalil2017learning}.

In this paper, we focus on leveraging the clustering techniques to highlight the connection that could be established between the capacitated vehicle routing problem ($\mathcal{CVRP}$) and the constrained centroid-based clustering ($\mathcal{CCBC}$). Establishing this connection can be highly advantageous for the operations research community, because there exists a valuable body of knowledge within the clustering community that can be harnessed to enrich $\mathcal{VRP}$ solution methodologies. The objective of this paper is to design a $\mathcal{CVRP}$ solver using a $\mathcal{CCBC}$ technique to reach good quality solution within reasonable runtime. This choice can be rationalized by the fact that the clustering reduces the huge combinatorial space through dealing with sub-instances instead of tackling the raw problem. This can be seen as a direct application of the divide-and-conquer paradigm to solve the $\mathcal{CVRP}$. Nevertheless, a straightforward implementation of this approach has some limitations and does not always guarantee good quality solutions. It inherits the same shortcomings from the classical clustering algorithms, specifically \cite{he2009balanced,ewbank2016unsupervised}: choice of initial centroids, local optima, unbalanced clusters, and border points.

In this work, we address four contributions. Particularly, we target defining the nature of the connection that can be set up between $\mathcal{CVRP}$ and $\mathcal{CCBC}$ and the related limitations. Additionally, the focus will be centered on the strategy to mitigate the impact of these limitations. This will result in providing $\mathcal{CVRP}$  solutions in a reasonable runtime while ensuring good quality. To the best of our knowledge, it is
the first time such an approach has been applied to tackle the $\mathcal{CVRP}$.
To sum up, we list below the main contributions of this paper: 
\begin{enumerate}
	\item Highlight through experiments the connection between the $\mathcal{CVRP}$ and $\mathcal{CCBC}$ and prove some theoretically related properties.
	\item Design a $\mathcal{CCBC}$ based approach to address the $\mathcal{CVRP}$ and relieve the shortcomings impact mentioned above.
	\item Provide a computational study on baseline instances from the literature showing near-optimal solution, resulting in an average gap of 1.07 \% to the optimal solution.
	\item Carry out an analysis study to shed light on the impact of added enhancements against the aforementioned shortcomings.	
\end{enumerate}

The remainder of this paper is organized as follows. Section \ref{lit} is dedicated to the literature review. Section~\ref{prbfor} is devoted to the problem formulation while Section~\ref{connect-cvrp_ccbc} introduces a warm-up study to highlight the connection between $\mathcal{CVRP}$ and $\mathcal{CCBC}$. Section~\ref{solmeth} presents the methodology while the experimentation and the post-computational analysis is given in Section~\ref{exper}. The last section provides conclusion and perspectives.

\section{Literature review} \label{lit}
This section presents a literature review about $\mathcal{VRP}$ characteristics and the common methodologies to solve it. The focus will especially be given to the use of the clustering-driven approaches to tackle this problem.

\subsection{\texorpdfstring{$\mathcal{VRP}$}{} variants}

The Vehicle Routing Problem ($\mathcal{VRP}$) and its numerous variants, estimated to be over 10 in number~\cite{elshaer2020taxonomic}, are 
universally known to be $\mathcal{NP}$-hard problems~\cite{lenstra1981complexity}. The capacitated vehicle routing problem ($\mathcal{CVRP}$)~\cite{dantzig1959truck} considered as the foundational variant and consists in serving all clients through a set of vehicles. Each vehicle starts from and ends at the depot, such that every route's total demand must not exceed the vehicle capacity. We outline a few notable variants such as :
\begin{description}
\item \textbf{Vehicle routing problem with time windows $\mathcal{VRPTW}$  \cite{solomon1987algorithms}:} This is an extension of $\mathcal{CVRP}$, requiring a soft or hard time windows to serve each client. 

\item \textbf{Vehicle routing problem with pickup and delivery $\mathcal{VRPPD}$ \cite{desaulniers2002vrp}:} This variant adds the complexity of handling both deliveries and pickups at customer locations.

\item \textbf{Dynamic Vehicle Routing Problem $\mathcal{DVRP}$ \cite{pillac2013review}:} This version adapts to real-time changes, such as customer requests or traffic conditions, during the operation.
\end{description}

Our paper will primarily focus on the $\mathcal{CVRP}$ variant. This choice is due to the fact that the complexity of the other variants largely originates from the intrinsic complexity of $\mathcal{CVRP}$, which serves as the cornerstone for all variants \cite{archetti2011complexity}. In the following sections, we present an overview of the classical approaches for solving $\mathcal{VRP}$ variants : Metaheuristics, heuristics, and exact methods. Then, the main concern will be centered on using the clustering techniques to solve $\mathcal{VRP}$ variants.

\subsection{Operations research techniques for solving \texorpdfstring{$\mathcal{VRP}$}{}}
Within the operations research community, one can categorize the commonly-used techniques for solving $\mathcal{VRP}$ into many classes, namely: exact approaches, heuristics, meta-heuristics, etc..
Exact methods tend to find optimal solutions but can be computationally expensive for large-scale $\mathcal{VRP}$. The work \cite{laporte1992vehicle} introduced a taxonomic overview of the foundations of exact methods for solving $\mathcal{VRP}$ variants. Furthermore, the paper \cite{baldacci2012recent} reviewed recent advancements within the exact solution approaches, focusing on mathematical formulations and relaxations used to address popular $\mathcal{VRP}$ variants, including capacity and time windows constraints. Using an exact approach, \cite{pecin2017improved} was capable of solving the majority of benchmark instances from the literature up to a size of 275 customers using a \textit{Branch-cut-and-price method}.

Many heuristics were presented to approximate near-optimal solutions for $\mathcal{VRP}$ variants. One can cite route-building heuristics, which are iteratively performed to combine customers in a route relying on specific criteria. The algorithm designed in \cite{baker1986solution} used branch exchange procedures (\textit{2-opt, 3-opt }) to gradually design routes with the maximum saving starting from singleton sets. 
In this context, the survey \cite{braysy2005vehicle1} shed light on route construction heuristics and local search algorithm to solve $\mathcal{VRPWT}$.

The family of meta-heuristics has been tremendously successful in practice for solving difficult combinatorial optimization problems. Unlike heuristics which are problem-dependent, meta-heuristics do not require any prior knowledge. Therefore, they are applied to a broad range of problems. One can mention the well-known ones such as simulated annealing, genetic algorithm, tabu search ...In this regard, the paper \cite{elshaer2020taxonomic} presented a taxonomic review of the existing meta-heuristics approaches in the literature to solve $\mathcal{VRP}$ variants. Additionally, \cite{braysy2005vehicle2} introduced a survey on the meta-heuristics for $\mathcal{VRPTW}$. 

\subsection{Clustering-based approaches for solving \texorpdfstring{$\mathcal{VRP}$}{}}
Given on the one hand that exact methods often fail to solve instances with more than 300 customers due to the problem complexity \cite{uchoa2017new}, and on the other hand, heuristics are less adaptable when it comes to changes within the problem, e.g., customer demand or position \cite{nazari2018reinforcement}, several studies have consequently shifted their focus towards solving the $\mathcal{VRP}$ using other techniques, e.g., by leveraging machine learning techniques. Clustering techniques have first been used for analysis purposes within the supply chain sector for many tasks, such as item partitioning in inventory \cite{axsater1981aggregation}, production \cite{ernst1990operations}, E-business \cite{cagliano2003business}. When it comes to $\mathcal{VRP}$, the clustering approach was first called upon to assist heuristics and meta-heuristics while solving VRP. In this context, many works made use of straightforward clustering methods such as : local search, random geographical partition of the area~\cite{barthelemy2010metaheuristic,gillett1974heuristic}. These techniques were applied in conjunction with heuristics and meta-heuristics such as simulated annealing, the sweep algorithm to reduce $\mathcal{VRP}$ complexity before designing the final solution \cite{kirkpatrick1983optimization,gillett1974heuristic,hiquebran1993revised}. 
Over the past few decades, the operations research community has leveraged the growth of machine learning to design clustering-based approaches to tackle the $\mathcal{VRP}$ using algorithms, namely: \textit{Kmeans, Fuzzy Cmean, Dbscan, and neural networks}.  Depending on the specific objectives of using clustering when solving  $\mathcal{VRP}$, one can classify the existing clustering-based approaches for solving $\mathcal{VRP}$ into three categories, more details and references are introduced in Table \ref{summVRP}:
\begin{description}
\item[Cluster-first, route-second \cite{kirkpatrick1983optimization} :  ] The initial step of this framework involves the clustering of customers, where each resulting cluster represents a feasible and unordered route. Subsequently, it proceeds to address each cluster independently by solving a $\mathcal{TSP}$. In the same context, the papers \cite{ewbank2019capacitated,ewbank2016unsupervised} made use of the well known \textit{fuzzy c-means} combined with a learning approach to define the relevant fuzziness parameter. Once this parameter is determined, the customers are assigned to clusters. Therefore, each cluster is handed to a $\mathcal{TSP}$ solver to design the final routes. 

\item[Clustered $\mathcal{VRP}$ \cite{barthelemy2010metaheuristic} ( \textit{CluVRP}): ] This is a transformation of the original $\mathcal{VRP}$ into a compact variant in which customers are partitioned into small clusters. A distinctive characteristic of the \textit{CluVRP} is that when a vehicle visits a customer, it must subsequently visit all the remaining customers in the cluster. After that, it approaches the problem using exact or non-exact methods to generate routes. As illustration, \cite{dondo2007cluster} capitalized on a customized clustering heuristic to aggregate clients into macro-nodes as an initial step to reduce the problem size. Then, an MILP is used to design routes through sequencing macro-nodes with respect to known constraints and considering a heterogeneous fleet of vehicles and multiple depots. Similarly, \cite{yucenur2011new} proposed a geometric shape-based genetic clustering algorithm to deal with a multi-depot vehicle routing problem. More works are presented in Table \ref{summVRP}. 

\item[Large-scale $\mathcal{VRP}$ decomposition : ] This technique is mainly designed for addressing large-scale $\mathcal{VRP}$. It stands as an effective approach, gaining recognition for its real-world applicability. The core idea involves clustering customers into groups to reduce the problem's complexity. Notably, this approach has shown its efficiency in enhancing the performance in terms of the runtime \cite{kirkpatrick1983optimization,gillett1974heuristic,hiquebran1993revised}. Numerous papers have recently used the known existing clustering algorithms to decompose the $\mathcal{VRP}$. The paper \cite{he2009balanced} designed a modified \textit{Kmeans} version with a border adjustment feature to get balanced clusters. In the same context, the paper \cite{bujel2018solving} relied on a recursive \textit{Dbscan} to partition large-scale $\mathcal{VRP}$. 
\end{description}

\begin{table}[h!]
	\begin{tabular}{lll} 
	\toprule
	\textbf{Class}& \textbf{Authors}& \textbf{Clustering algorithm} \\
	\midrule
	\multirow{2}{4cm}{Cluster first, route second ($\mathcal{CFRS}$)} & Ewbank {et al.}\cite{ewbank2019capacitated}  & Fuzzy c-means + Neural network \\
	%\cmidrule(lr){2-3} 
	&Ewbank~{et al.} \cite{ewbank2016unsupervised} & Fuzzy c-means + Neural network\\ 
	\midrule
	\multirow{4}{4cm}{Clustered $\mathcal{VRP}$( \textit{CluVRP} )} & 
	Bektas~{et al.}\cite{bektacs2011formulations}, Hintsch~{et al.} \cite{hintsch2020exact}  & Clustering \\ 
	%\cmidrule(lr){2-3} 
	& Barthelemy~{et al.} \cite{barthelemy2010metaheuristic}, Vidal~{et al.} \cite{vidal2015hybrid} & Iterated local search\\
	%\cmidrule(lr){2-3} 
	&Dondo~{et al.} \cite{dondo2007cluster} & Customized clustering procedure\\ 
	%\cmidrule(lr){2-3} 
	& Alesiani~{et al.} \cite{alesiani2022constrained} & Self-adapted K-means\\ 
	\midrule
	\multirow{3}{4cm}{Large-scale $\mathcal{VRP}$ decomposition} & He~{et al.} \cite{he2009balanced} & Balanced K-means \\
	%\cmidrule(lr){2-3} 
	& Bujel~{et al.} \cite{bujel2018solving} & Recursive Dbscan\\ 
	%\cmidrule(lr){2-3} 
	& Gillett~{et al.} \cite{gillett1974heuristic} & Random geographical partition\\ 
	\bottomrule
	\caption{Summary of clustering-based approaches for solving $\mathcal{VRP}$ variants.}
	\label{summVRP}
	\end{tabular}
\end{table}	

Through this literature review, we can notice that the existing clustering-based approaches put significant emphasis on the post-clustering phase when solving the $\mathcal{VRP}$. Concretely, the main contributions of the majority of these papers are not predominantly related to the clustering aspect itself but rather revolve around the development of exact or non-exact methods for generating final routes from the designed clusters. Additionally, the cited papers rely on the clustering as preprocessing scheme to reduce the $\mathcal{VRP}$ complexity, thereby improving the runtime performance. In contrast to that, the current paper focuses on improving the clustering step in order to obtain better performance in terms of both runtime and quality solution. This is accomplished through an in-depth analysis of the connection we can establish between the clustering and the $\mathcal{CVRP}$ as elaborated in Section \ref{connect-cvrp_ccbc}.

\section{Problem statement} \label{prbfor}
%%%%%%%%%%%%%%%%%%%%%%%%%%%%%%%%%%%%%%%%%%%%%%%%%%%%%%%%
Given our intention in this paper to establish  theoretical and experimental connections, between $\mathcal{CVRP}$ and $\mathcal{CCBC}$, we divide this section into three parts. The first part introduces the used notation throughout the paper. Then, the second one sheds light on $\mathcal{CVRP}$ related concepts and the third one presents the centroid-based clustering approach.

\subsection{Mathematical notation}
We first provide a detailed overview of the mathematical notation used throughout this paper. The sets, indices, parameters and decision variables
are introduced in Table \ref{vrpformulation}.

\subsection{Capacitated vehicle routing problem}
The $\mathcal{VRP}$ can be defined with respect to many constraints. We limit our current study to the capacity constraint. From a practical perspective, we can describe the $\mathcal{CVRP}$
problem as follows: a set of customers are located in different places and should be served through a fleet of vehicles. Each vehicle has a well-known capacity and has the same starting and ending point called depot as illustrated in Figure \ref{cvrp}. The goal is to determine a dispatching plan that minimizes the total traveled distance. Each customer must be visited only once, and the total demand of a complete route must not exceed the vehicle capacity. Furthermore, it should be noted that the current study is limited to a homogeneous fleet. This problem is formulated using a mixed integer linear program \eqref{objfun}--\eqref{eq71}. 
\begin{align}
	\min & \,\,\,\, \sum_{v=1}^{M} \sum_{i=0}^{N}\sum_{j=0, \,i \neq j}^{N}d_{ij}x_{ij}^{v}
	& & \label{objfun}\\
	\text{s.t.} &\,\,\,\, \sum_{v=1}^{M} \sum_{i=0,i\neq j}^{N}x_{ij}^{v}=1 
	&& \forall j \in \mathcal{C}  \label{eq211}\\
	&\,\,\,\, \sum_{i=1}^{N}x_{0i}^{v} \leq 1 &&  \forall v \in \mathcal{V} \label{eq311}\\
	&\,\,\,\,\sum_{i=0,\,i\neq j}^{N}x_{ij}^{v}-\sum_{i=0,\,i\neq j}^{N}x_{ji}^{v}=0  && \forall v \in \mathcal{V}, \forall j \in \mathcal{C}\label{eq41}\\
	&\,\,\,\,\sum_{j=0}^{N} \sum_{i=0,\, i\neq j}^{N}q_j x_{ij}^{v} \leq Q   && \forall v \in \mathcal{V}\label{eq51}\\
	&\,\,\,\, \sum_{v=1}^{M} \sum_{i  \in S}\sum_{j  \in  S, i \neq j} x_{ij}^v\leq |S|-1  &&\forall  S \subseteq \mathcal{C},  2 \leq |S| \leq  N\label{eq61}\\
	&\,\,\,\,  x_{ij}^{v}\in \{0,1\}  && \forall v \in \mathcal{V},\forall i,j \in \mathcal{C} \label{eq71}
\end{align} 
The objective function \eqref{objfun} minimizes the total traveled distance of the vehicles. Constraint \eqref{eq211} ensures that each customer is visited by exactly one vehicle. Constraint \eqref{eq311} makes sure that every vehicle is selected once at most. Constraint \eqref{eq41} guarantees the continuity of the route. Constraint \eqref{eq51} ensures that the total demand transported by a vehicle does not exceed its capacity. Constraint \eqref {eq61} prevents the formation of sub-tours and \eqref {eq71} assures the binary nature of the decision variables.

\begin{table}[h!]
	\setlength{\tabcolsep}{3pt} 
	\begin{tabular}{lll}
	\toprule
	& \textbf{Notation}& \textbf{Definition} \\
	\midrule
	{Sets} & \(\mathcal{V}\) & set of vehicles \\
	%\cmidrule(lr){2-3} 
	& \(\mathcal{C}\) & set of customers\\ 
	%\cmidrule(lr){2-3} 
	& \(\mathcal{R}\) & set of routes\\
	%\cmidrule(lr){2-3} 
	& \(\mathcal{S}\) & set of clusters\\ 
	%\cmidrule(lr){2-3} 
	& \(\mathcal{Q}\) & set of demands\\ 
	%\cmidrule(lr){2-3} 
	& \(\Omega\) & set of centroids\\ 
	%\cmidrule(lr){2-3} 
	& \(\mathcal{PR}_l\) & set of pieces of routes starting at the depot\\
	%\cmidrule(lr){2-3} 
	& \(\mathcal{PR}_r\) & set of pieces of routes ending at the depot\\ 
	%\cmidrule(lr){2-3} 
	& \(\mathcal{X}\) & euclidean space\\ 
	%\cmidrule(lr){2-3} 
	& \(\rho_{r}\) & set of customers served by route $r$\\
	%\cmidrule(lr){2-3} 
	& \(prl_{o}\) & piece of route $o$ starting at the depot\\
	%\cmidrule(lr){2-3} 
	& \(prr_{t}\) & piece of route $t$ ending at the depot\\
	\midrule
	{Indices} & $i$, $j$ & customer $i$, $j$ such that $i,j=0$ corresponds to the depot \\ 
	%\cmidrule(lr){2-3} 
	& $v$ & vehicle $v$  \\
	%\cmidrule(lr){2-3} 
	& $r$ & route $r$  \\
	%\cmidrule(lr){2-3} 
	& $k$ & cluster $k$  \\
	%\cmidrule(lr){2-3} 
	& $o$ & piece of route $o$ starting at the depot  \\
	%\cmidrule(lr){2-3} 
	& $t$ & piece of route $t$ ending at the depot  \\
	\midrule
	{Parameters} & $N=|\mathcal{C}|$ & number of customers\\ 
	%\cmidrule(lr){2-3} 
	& $M=|\mathcal{V}|$ & number of vehicles  \\
	%\cmidrule(lr){2-3} 
	& $K=|\mathcal{K}|$ & number of clusters  \\ 
	%\cmidrule(lr){2-3}
	& $d_{ij}$ & distance between customer $i$ and $j$\\
	%\cmidrule(lr){2-3}
	& $\delta_{ot}$ & total distance of joining piece of route $o$ and piece of route $t$\\
	%\cmidrule(lr){2-3}
	& $Q$ & capacity of a vehicle\\
	%\cmidrule(lr){2-3}
	& $q_i$ & demand of customer $i$, $q_0=0$\\
	%\cmidrule(lr){2-3}
	& $\tau_{ot}$ & total demand of joining piece of route $o$ and piece of route $t$\\
	%\cmidrule(lr){2-3}
	& $a_{ir}$ & equal to 1 if customer $i$ is served by route $r$\\
	%\cmidrule(lr){2-3}
	& $\gamma_{iot}$ & equal to 1 if customer $i$ exists either in the piece of route $o$ or in piece of route $t$\\
	%\cmidrule(lr){2-3}
	& $c_{r}$ & cost of route $r$\\
	%\cmidrule(lr){2-3}
	& $\mu_{k}=(\mathscr{x}_{\mu_{k}},\mathscr{y}_{\mu_{k}})$ & euclidean coordinates of centroids $\mu_k$ of cluster $k$\\
	%\cmidrule(lr){2-3}
	& $\mu_{k}^v=(\mathscr{x}_{\mu_{k}^v},\mathscr{y}_{\mu_{k}^v})$ & euclidean coordinates of centroids $\mu_k^v$ of cluster $k$ yielded by $\mathcal{CVRP}$\\
	%\cmidrule(lr){2-3}
	& $\mu_{k}^c=(\mathscr{x}_{\mu_{k}^c},\mathscr{y}_{\mu_{k}^c})$ & euclidean coordinates of centroids $\mu_k^c$ of cluster $k$ yielded by $\mathcal{CCBC}$\\
	%\cmidrule(lr){2-3}
	& $\mathscr{c}_{i}=(\mathscr{x}_{i},\mathscr{y}_{i})$ & euclidean coordinates of customers  $\mathscr{c}_{i}$\\
	%\cmidrule(lr){2-3}
	& $P=(\mathscr{x}_{P},\mathscr{y}_{P})$ & point from the euclidean space $\mathcal{X}$\\
	\midrule
	Decision variables & $x_{ij}^{v}$ & binary variable equal to 1 if vehicle $v$ visits $j$ after $i$, 0 otherwise\\
	%\cmidrule(lr){2-3}
	& $\theta_r^v$ & binary variable equal to 1 if route $r$ is used by vehicle $v$, 0 otherwise\\
	%\cmidrule(lr){2-3}
	& $y_k$ & binary variable equal to 1 if cluster $k$ is selected, 0 otherwise\\
	%\cmidrule(lr){2-3}
	& $u_o$ & binary variable equal to 1 if piece of route $o$ is selected, 0 otherwise\\
	%\cmidrule(lr){2-3}
	& $w_t$ & binary variable equal to 1 if piece of route $t$ is selected, 0 otherwise\\
	%\cmidrule(lr){2-3}
	& $z_{ot}$ & binary variable equal to 1 if piece of route $o$ is joined with piece of route $t$\\
	\bottomrule
	\caption{Mathematical notation.}
	\label{vrpformulation}
\end{tabular}
\end{table}

There exists another common formulation where the $\mathcal{CVRP}$ is introduced as a set partitioning problem ($\mathcal{SPP}$). This formulation is based on the concept of a \textit{route}. We define a \textit{route} as a sequence of customers $\rho_r=\{ \mathscr{c}_0,\mathscr{c}_i,...,\mathscr{c}_{j},\mathscr{c}_0\}$ for $i, j \in \mathcal{C}$ such that $|\rho_r|>2$, and  $\sum_{\mathscr{c}_i \in \rho_r} q_i \leq Q$. For each route $\rho_r$ and customer $\mathscr{c}_i$ such that $r \in \mathcal{R}$ and $ i \in \mathcal{C}$, we define the parameters $a_{ir}$ and $\theta_{r}^{v}$ as follows : 
\begin{equation*}
a_{ir}= \left\{
\begin{array}{ll}
	\ 1& \mbox{if customer } \mathscr{c}_i  \in \rho_r \\
	\ 0 & \mbox{else}
\end{array},
\right.
\text{ and }%, and 
\theta_{r}^{v}= \left\{
\begin{array}{ll}
	\ 1& \text{if route $\rho_r$ is used by a vehicle }  v \in \mathcal{V} \\
	\ 0 & \mbox{else}
\end{array}
\right.
\end{equation*}
We define $c_r$ as the travel cost of the route $\rho_r$.
Therefore, one can formulate the set partitioning formulation of the $\mathcal{CVRP}$ as follows:
\begin{align}
	\min & \,\, \sum_{v \in \mathcal{V} }\sum_{r \in \mathcal{R} } c_{r}\theta_{r}^{v} & & \label{ssp-obj}\\
	\text{s.t.} &\,\,\sum_{v \in \mathcal{V}} \sum_{r \in \mathcal{R}} a_{ir} \theta_{r}^{v}=1 && \forall i \in \mathcal{C} \label{eq91}\\
	&\,\, \sum_{v \in \mathcal{V}}\sum_{r \in \mathcal{R}} \theta_{r}^v \leq |\mathcal{V}| && \label{eq101}\\
	&\,\,\theta_{r}^v \in \{0,1\} && \forall r \in\mathcal{R}, v \in\mathcal{V} \label{eq111}
\end{align}

The objective function \eqref{ssp-obj} minimizes the total traveled distance of the vehicles. Constraint \eqref{eq91} guarantees that each customer is visited by exactly one vehicle. Constraint \eqref{eq101} ensures that the targeted number of vehicles is not exceeded. Finally, Constraint \eqref{eq111} makes sure that the decision variables are binary.

\begin{figure}[h!]
	\centering
	\begin{tikzpicture}[scale=1.5]
		% Define nodes
		\node[draw,fill=red] (depot) at (0,0) {\faHome}; % Depot icon
		\node[draw,fill=blue] (c1) at (1,2) {\faUser}; % Customer icon
		\node[draw,fill=blue] (c2) at (3,1) {\faUser}; % Customer icon
		\node[draw,fill=blue] (c3) at (4,-1) {\faUser}; % Customer icon
		\node[draw,fill=blue] (c4) at (2,-2) {\faUser}; % Customer icon
		
		% Adding vehicles (icons)
		\node[draw,fill=green] (vehicle1) at (1.7,1) {\faTruck}; % Vehicle icon
		\node[draw,fill=green] (vehicle2) at (3.4,-0.6) {\faTruck}; % Vehicle icon
		
		% Draw routes
		\draw[dashed,->] (depot) -- (c1);
		\draw[dashed,->] (c1) -- (c2);
		\draw[dashed,->] (c2) -- (depot);
		
		\draw[dashed,->] (depot) -- (c3);
		\draw[dashed,->] (c3) -- (c4);
		\draw[dashed,->] (c4) -- (depot);
		
		% Add labels
		\node[above, yshift=3mm] at (c1) {$\mathscr{c}_1$};
		\node[above, yshift=3mm] at (c2) {$\mathscr{c}_2$};
		\node[below, yshift=-3mm] at (c3) {$\mathscr{c}_3$};
		\node[below, yshift=-3mm] at (c4) {$\mathscr{c}_4$};
		
		\node[above right, xshift=2mm, yshift=2mm] at (depot) {Depot};
	\end{tikzpicture}
	\caption{$\mathcal{CVRP}$ problem.}
	\label{cvrp}
\end{figure}

\subsection{Centroid-based clustering in euclidean space}
Centroid-based clustering is an unsupervised machine learning approach to partition data into groups known as clusters with the objective to maximize the similarity within clusters and concurrently minimize similarity between clusters as shown in Figure \ref{cbc}. In other words, in the case of the euclidean distance, it aims at defining each cluster centroid in order to minimize the within-clusters sum of squared distances denoted by \textit{withinss} \cite{wang2011ckmeans}. Among the widely-used algorithms of centroid-based clustering, one can mention \textit{K-means}.

Given a set of customers $\mathcal{C}=\{\mathscr{c}_1, \mathscr{c}_2, ..., \mathscr{c}_n\}$ in the euclidean space denoted by $\mathcal{X}$ where each one is represented by a 2-dimensional vector of euclidean coordinates. We denote $\mu_k$ the centroid of a cluster $S_k \in \mathcal{S}$, such that: $\mu_k=argmin_{x^{'} \in \mathcal{X}}\sum\limits_{\mathscr{c}_i \in S_k} d(\mathscr{c}_i,x^{'})$, and $\mathcal{S}=\{S_1,S_2..,S_K\}$ is the set of $K$ clusters.
One can introduce the optimization formulation for centroid-based clustering as follows:
\begin{align}
	\min        & \,\,  \sum_{k=1 }^{K} \sum_{\mathscr{c}_i \in S_k} d(\mathscr{c}_i,\mu_k) & & \label{eq121}\\
	\text{s.t.} & \,\, \cup_{k=1 }^K S_k= \mathcal{C}& & \label{eq131}\\
	& \,\, S_{k_1} \cap S_{k_2} = \emptyset && \forall S_{k_1}, S_{k_2} \in \mathcal{S}, k_1 \neq k_2\label{eq141}
\end{align}
This mathematical model involves the assignment of the set of customers $\mathcal{C}$ to $K$ clusters while minimizing the \textit{withinss} value which refers to the sum of the euclidean distance $d(\mathscr{c}_i,\mu_k)$ between each $\mathscr{c}_i \in S_k$ and its corresponding centroid $\mu_k$ . This euclidean distance is calculated as follows: 
\begin{equation}
	d(\mathscr{c}_i,\mu_k)=\sqrt{(\mathscr{x}_i-\mathscr{x}_{\mu_k})^2+(\mathscr{y}_i-\mathscr{y}_{\mu_k})^2}
	\label{eq15}
\end{equation}
Each centroid coordinates are computed as follows: 
\begin{equation*}
	\mu_k= (\frac{\sum\limits_{\mathscr{c}_j \in S_k} \mathscr{x}_j}{\left|S_k \right|},\frac{\sum\limits_{\mathscr{c}_j \in S_k} \mathscr{y}_j}{\left|S_k \right|})
	%\label{equa10}
\end{equation*}

\begin{figure}[h!]
	\centering
	\begin{tikzpicture}
		% Cluster 1 points
		\foreach \i/\x/\y in {1/1.5/4, 2/2/3.5, 3/1.8/3.2, 4/2.2/3.8, 5/1.9/3.5} {
			\fill[color=blue] (\x,\y) circle (3pt);
			\node[color=blue] at (\x,\y) {\footnotesize $x_{\i}$};
		}
		
		% Cluster 2 points
		\foreach \i/\x/\y in {1/4/6, 2/4.5/5.5, 3/4.2/5.8, 4/4.7/6.3} {
			\fill[color=red] (\x,\y) circle (3pt);
			\node[color=red] at (\x,\y) {\footnotesize $x_{\i}$};
		}
		\node[draw,fill=blue,diamond,minimum size=5pt,inner sep=0pt] at (1.9,3.8) {};
		\node[right] at (2.0,4.8) {};
		
		\node[draw,fill=red,diamond, minimum size=5pt, inner sep=0pt] at (4.3,5.9) {};
		\node[above right] at (4.3,5.9) {};
		
		% Dashed circle
		\draw[dashed,blue] (3.3, 4.85) circle (3cm);
		\node[above,blue] at (2.35, 7.9) {\footnotesize $\mathcal{C}$};
		
		% Cluster labels
		\node[above, color=blue] at (2.0, 4.8) {Cluster 1 };
		\node[right, color=red] at (4.3, 5.9) {Cluster 2};
		% Legend
		\node[draw,fill=blue,diamond,minimum size=5pt,inner sep=0pt] at (6.5, 7) {};
		\node[right] at (6.8, 7) {$\mu_1$};
		
		\node[draw,fill=red,diamond,minimum size=5pt,inner sep=0pt] at (6.5, 6.5) {};
		\node[right] at (6.8, 6.5) {$\mu_2$};
		% Axes
		\draw[->] (0,1.5) -- (6,1.5) node[right] {$x$};
		\draw[->] (0,1.5) -- (0,8) node[above] {$y$};
	\end{tikzpicture}
	\caption{Centroid-based clustering.}
	\label{cbc}
\end{figure}
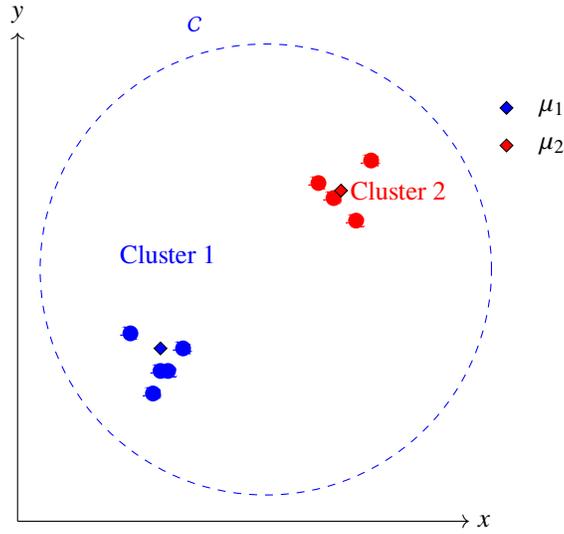

\section{Connection between \texorpdfstring{$\mathcal{CCBC}$ and $\mathcal{CVRP}$}{}} \label{connect-cvrp_ccbc}

The $\mathcal{SPP}$ main objective is to partition elements of a set $S$ into smaller subsets. All items in $S$ must be contained in one and only one subset. The $\mathcal{CCBC}$ problem \eqref{eq121}--\eqref{eq141} can be formulated differently as an $\mathcal{SPP}$ with $K$ subsets, such that $y_k=1$ indicates that $S_k\in \mathcal{S}$ is selected, 0 otherwise. A formal proof can be found in \cite{wu2016clustering} :
\begin{align}
	\min 		& \,\, \sum_{S_k\in \mathcal{C}}\sum_{\mathscr{c}_i \in S_k} d(\mathscr{c}_i,\mu_k) y_k  & & \label{eq17}\\
	\text{s.t.} & \,\, \sum_{S_k: \mathscr{c}_i\in S_k}y_k= 1 && \forall i \in \mathcal{C}\label{eq18}\\
				& \,\, \sum_{S_k \in \mathcal{C}}y_k= K && \label{eq19}\\
				& \,\, y_k \in \{0,1\}  && \forall k \in \mathcal{K}\label{eq20}
\end{align}
One can first observe that the problems $\mathcal{CVRP}$ and $\mathcal{CCBC}$ can both be reduced to a set partitioning problem. Given a set of customers, for fixed centroids the two formulations \eqref{eq121}--\eqref{eq141} and \eqref{eq17}--\eqref{eq20} share the same constraints and differ in terms of objective function. Based on this fact, one can expect defining a formal way to reduce a $\mathcal{CVRP}$ problem to a $\mathcal{CCBC}$. In other words, under some conditions, the exercise of solving a $\mathcal{CCBC}$ provides an optimal or near-optimal solution to $\mathcal{CVRP}$. Subsequently, our objective in what follows is to experimentally elucidate the existence of a connection between $\mathcal{CVRP}$ and $\mathcal{CCBC}$ and deduce some related properties.

\subsection{Exploratory analysis through small-sized examples}
In light of the previous analysis about the connection between $\mathcal{CVRP}$ and $\mathcal{CCBC}$, we aim to assess the extent to which a $\mathcal{CVRP}$ can be reduced to a $\mathcal{CCBC}$. This entails determining whether identifying clusters effectively translates into establishing routes, achieved by implementing a $\mathcal{TSP}$ within each cluster. The following analysis serves as a warm-up study to : 
\begin{enumerate}[label=(\roman*)] 
	\item Highlight through experiments the existence of a connection between these two problems
	\item Deduce and prove some theoretical properties related to this connection between $\mathcal{CVRP}$ and $\mathcal{CCBC}$. 
\end{enumerate}
We particularly focus on elucidating how closely an optimal solution derived from $\mathcal{CCBC}$ problem aligns with an optimal solution of the $\mathcal{CVRP}$. We evaluate this claim through conducting the following  experiment on small-sized instances : 
\begin{itemize}
	\item Randomly generate small-sized instances. Each one encompasses, at most, 10 customers that are spatially distributed within the euclidean space. Concretely, for each instance $I$, a set of $n$ customers is randomly generated from the spatial domain $\left[0,10\right] \times \left[ 0,10\right]$. The demand associated with each of these customers is randomly selected from the interval $\left[ 0,10\right]$. For this problem, the vehicle capacity is 10.
	\item For every instance, we designate a specific customer to be the depot.
	\item We optimally solve $\mathcal{CCBC}$ problem. Every cluster must contain the depot.
	\item Evaluate the derived $\mathcal{CVRP}$  solution through applying a $\mathcal{TSP}$ algorithm within each cluster.
	\item Compare the previous solution with the $\mathcal{CVRP}$ optimal solution.
\end{itemize}

The justification of the use of small-sized instances is grounded in the easiness of achieving optimal solutions for both $\mathcal{CVRP}$ and for $\mathcal{CCBC}$ using exact approaches. Table \ref{t1} illustrates the obtained results. For every size $n$, we conduct the exploratory study described above on 500 instances generated following the aforementioned process. By doing this, we check if $\mathcal{CCBC}$ solution can align with $\mathcal{CVRP}$ solution under some conditions. We denote the set of instances for which the $\mathcal{CCBC}$ optimal solution $Sol_{\mathcal{CCBC}}(I)$ leads to the $\mathcal{CVRP}$ optimal solution $Sol_{\mathcal{CVRP}}(I)$ by $\mathcal{I}_{1}=\{I :  Sol_{\mathcal{CCBC}}(I)\implies Sol_{\mathcal{CVRP}} (I) \}$. Similarly, we refer to the set for which this implication is not verified with  $\mathcal{I}_{2}=\{I  : Sol_{\mathcal{CCBC}}(I) \centernot\implies Sol_{\mathcal{CVRP}} (I) \}$. In details, Table \ref{t1} reports the obtained results in terms of sets size, i.e. $ |\mathcal{I}_{1}|$, $ |\mathcal{I}_{2}|$.

We can clearly notice that for small-size instances the $\mathcal{CCBC}$ optimal clusters coincide with the $\mathcal{CVRP}$ optimal routes in most cases. However, for many other instances they differ in terms of the optimal solution, especially for $n=9$. In light of this outcome, for set $\mathcal{I}_2$, we endeavor to assess the relative gap $\mathcal{GAP}_{r}^{withinss}$ between the value of the optimal $\mathcal{CCBC}$ denoted by $withinss^c(I)$ and the value of the clustering that yields the $\mathcal{CVRP}$ optimal solution  indicated by $withinss^v (I)$. This relative gap is calculated as follows: 
\begin{equation}
	\mathcal{GAP}_{r}^{withinss}(I)=\frac{withinss^v(I)-withinss^c(I)}{withinss^c(I)} \times 100
\end{equation}

Table \ref{tableGap} above confirms that there exists a small relative gap between the optimal $\mathcal{CCBC}$ and the clustering that provides an optimal $\mathcal{CVRP}$ solution in terms of objective function on average $\overline{\mathcal{GAP}_{r}^{withinss}}$. Furthermore, these results grant more legitimacy to the hypothesis claiming that the centroids of the clusters that yield an optimal $\mathcal{CVRP}$ solution are closely located nearby the centroids of clusters derived from the optimal $\mathcal{CCBC}$.

\begin{minipage}{.5\linewidth}
	\begin{table}[H]
		\centering
		\begin{tabular}{c c c } 
			\toprule
			\textit{\textbf{Instance size }} & $ |\mathcal{I}_{1}|$& $ |\mathcal{I}_{2}|$  \\ [0.5ex] 
			\midrule
			$n=5$   & 406& 94\\ 
			$n=7 $ & 248 & 252\\
			$n=9 $ & 161 & 339\\
			\bottomrule
		\end{tabular}
		\caption{ $|\mathcal{I}_{1}|$,  $ |\mathcal{I}_{2}|$ per instance size.}
		\label{t1}
	\end{table}
\end{minipage}
\begin{minipage}{.5\linewidth}
	\begin{table}[H]
		\centering
		\begin{tabular}{c c c } 
			\toprule
			\textit{\textbf{Instance size }} & $\overline{\mathcal{GAP}_{r}^{withinss}}$(\%)\\ [0.5ex] 
			\midrule
			$n=5$  & 1.37\\ 
			$n=7 $ & 0.95\\
			$n=9 $ & 1.70 \\
			\bottomrule
		\end{tabular}
		\caption{ $\overline{\mathcal{GAP}_{r}^{withinss}}$ results per instance size.}
		\label{tableGap}
	\end{table}
\end{minipage}

\paragraph{Delving into a specific example :}
Through the subsequent example, we aim to visually elucidate the previously stated hypothesis and thereby exemplify the connection between $\mathcal{CVRP}$ and $\mathcal{CCBC}$. Concretely, we select one instance $ I^{\ast} \in \mathcal{I}_{2}$ with features introduced in Table \ref{instfeat}. We first represent both $\mathcal{CCBC}$ and $\mathcal{CVRP}$ solutions for this instance in Figures~\ref{fig1} and~\ref{fig2} respectively. We observe that the clusters shapes clearly do not coincide. In particular, customers 2 and 1 belong to cluster 1 within $\mathcal{CCBC}$ optimal solution while customer 2 forms with customers 3 and 4 a single cluster when it comes the $\mathcal{CVRP}$ optimal solution. Consequently, the $\mathcal{CCBC}$ solution does not imply the $\mathcal{CVRP}$ solution because customer 2 is nearer to centroid $\mu_1^c$ rather than $\mu_2^c$. Nevertheless, one can visually notice according to Figure \ref{suppercent} that  $\mathcal{CCBC}$ centroids are near to  $\mathcal{CVRP}$ centroids.
\begin{table}[H]
	\centering
	\begin{tabular}{c c c c} 
		\toprule
		\textit{\textbf{$P$}} & $\mathscr{x}_{P} $& $\mathscr{y}_{P} $ & $q_{P}$ \\ %[0.5ex] 
		\midrule
		0 & 1& 1 & 0\\
		1 & 2&3&6\\
		2 & 3& 3&1\\
		3 & 2& 5&1\\
		4 & 1& 7&8\\
		\bottomrule
	\end{tabular}
	\caption{ Instance $I^{\ast}$ features.}
	\label{instfeat}
\end{table}

To expand the experimental and the theoretical frame behind the hypothesis, we will conduct an experiment aiming at exploring the neighborhood around the clusters centroids $\{\mu_0^c,\mu_1^c\}$. In detail, for the instance $I^{\ast}$ in Table \ref{instfeat}, we explore around each $\mathcal{CCBC}$ centroid, specifically inside the rectangles defined by:
$\mathcal{RC}_0=\{P=(\mathscr{x}_{P},\mathscr{y}_{P}) \,\, \in \mathcal{X} \,\, \text{:} \,\, min(\mathscr{x}_{\mu_{0}^v},\mathscr{x}_{\mu_{0}^c})\}\leq \mathscr{x}_{P} \leq max(\mathscr{x}_{\mu_{0}^v},\mathscr{x}_{\mu_{0}^c}) \,\,  \text{,} \,\, min(\mathscr{y}_{\mu_{0}^v},\mathscr{y}_{\mu_{0}^c})\}\leq \mathscr{y}_{P} \leq max(\mathscr{y}_{\mu_{0}^v},\mathscr{y}_{\mu_{0}^c})\}$

$\mathcal{RC}_1=\{P=(\mathscr{x}_{P},\mathscr{y}_{P}) \,\, \in \mathcal{X}\,\, \text{:} \,\, min(\mathscr{x}_{\mu_{1}^v},\mathscr{x}_{\mu_{1}^c})\}\leq \mathscr{x}_{P} \leq max(\mathscr{x}_{\mu_{1}^v},\mathscr{x}_{\mu_{1}^c}) \,\, \text{,} \,\, min(\mathscr{y}_{\mu_{1}^v},\mathscr{y}_{\mu_{1}^c})\}\leq \mathscr{y}_{P} \leq max(\mathscr{y}_{\mu_{1}^v},\mathscr{y}_{\mu_{1}^c})\}$

We search for centroids candidates that provide the optimal solution for $\mathcal{CVRP}$. The exploration findings are presented in Figure \ref{explocent}. It is clear that around each centroid within the explored neighborhood, we can define a region of multiple points that gives an optimal solution of the $\mathcal{CVRP}$. 
These findings provide more credibility to the conjecture which states that it is sufficient to solve a $\mathcal{CCBC}$ when solving a $\mathcal{CVRP}$. Explicitly, one can first solve the $\mathcal{CCBC}$ and simultaneously  evaluate the corresponding $\mathcal{CVRP}$ solution, then search for a better centroids combination within the neighborhood that improves the $\mathcal{CVRP}$ solution. 
From a practical point of view, one can target the nearest points to the $\mathcal{CCBC}$ centroids that guarantee an optimal solution of the $\mathcal{CVRP}$.

In the context of the example we are examining, we can formulate the task of finding the nearest points combination to the $\mathcal{CCBC}$ centroids that provides a $\mathcal{CVRP}$ optimal solution as a quadratic multivariate mathematical model \eqref{eq21}--\eqref{eq25}:
\begin{align}
	\min_{ \mu_1, \mu_2 \in \mathcal{X}} & \,\,\,\, (\mathscr{x}_{\mu_1}- \mathscr{x}_{\mu_1^c})^2+(\mathscr{y}_{\mu_1}- \mathscr{x}_{\mu_1^c})^2+(\mathscr{x}_{\mu_2}- \mathscr{x}_{\mu_2^c})^2+(\mathscr{y}_{\mu_2}- \mathscr{x}_{\mu_2^c})^2
	\label{eq21}\\
	s.t &\,\,\,\,(\mathscr{x}_{\mu_1}- \mathscr{x}_1)^2+(\mathscr{y}_{\mu_1}- \mathscr{y}_1)^2 \leq (\mathscr{x}_{\mu_2}- \mathscr{x}_1)^2+(\mathscr{y}_{\mu_2}- \mathscr{y}_1)^2
	\label{eq22}\\
	&\,\,\,\,  (\mathscr{x}_{\mu_2}- \mathscr{x}_2)^2+(\mathscr{y}_{\mu_2}- \mathscr{y}_2)^2 \leq (\mathscr{x}_{\mu_1}- \mathscr{x}_2)^2+(\mathscr{y}_{\mu_1}- \mathscr{y}_2)^2
	\label{eq23}\\
	&\,\,\,\, (\mathscr{x}_{\mu_2}- \mathscr{x}_3)^2+(\mathscr{y}_{\mu_2}- \mathscr{y}_3)^2\leq (\mathscr{x}_{\mu_1}- \mathscr{x}_3)^2+(\mathscr{y}_{\mu_1}- \mathscr{y}_3)^2
	\label{eq24} \\
	&\,\,\,\,(\mathscr{x}_{\mu_2}- \mathscr{x}_4)^2+(\mathscr{y}_{\mu_2}- \mathscr{y}_4)^2 \leq (\mathscr{x}_{\mu_1}- \mathscr{x}_4)^2+(\mathscr{y}_{\mu_1}- \mathscr{y}_4)^2 
	\label{eq25}
\end{align}

The objective function, as defined in Equation \eqref{eq22}, aims to identify centroids $(\mu_1,\mu_2)$ that are closer to the $\mathcal{CCBC}$ centroids $(\mu_1^c, \mu_2^c)$. Constraints \eqref{eq23}--\eqref{eq26}  are designed to ensure that the identified centroids match the customers assignment to clusters as given by the $\mathcal{CCBC}$ outcome. Specifically, customer 1 is assigned to the cluster centered around $\mu_1$, while customers 2, 3, and 4 are grouped in the cluster around $\mu_2$. It should be noted that Constraints \eqref{eq23}--\eqref{eq26} aim to minimize the distance between each cluster and its corresponding centroid, thereby reducing the overall $withinss$ as introduced in Equation \eqref{eq121}.

We use a sequential least square programming  solver $\mathcal{SLSP}$ to get a solution of the model above \eqref{eq21}--\eqref{eq25}. We plot the corresponding centroids in Figure~\ref{NearestCCbc}. An observation reveals that there exist centroids closer to the $\mathcal{CCBC}$ ones, which enables obtaining the optimal solution for the $\mathcal{CVRP}$.

%%%%%%%%%%%%%%%%%%%%%%%%%%%%%%%%%%%%%%%%%%%%
\begin{figure}[h!]
	\centering
	% Figure A
	\begin{minipage}{0.48\textwidth}
		\centering
		\includegraphics[width=\linewidth]{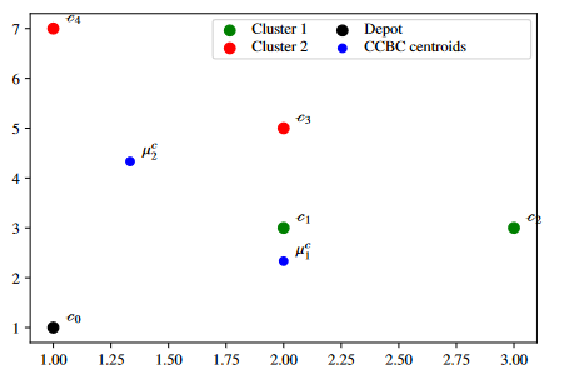} % Replace with your first image file
		\caption{Optimal $\mathcal{CCBC}$.}
		\label{fig1}
	\end{minipage}\hfill
	% Figure B
	\begin{minipage}{0.48\textwidth}
		\centering
		\includegraphics[width=\linewidth]{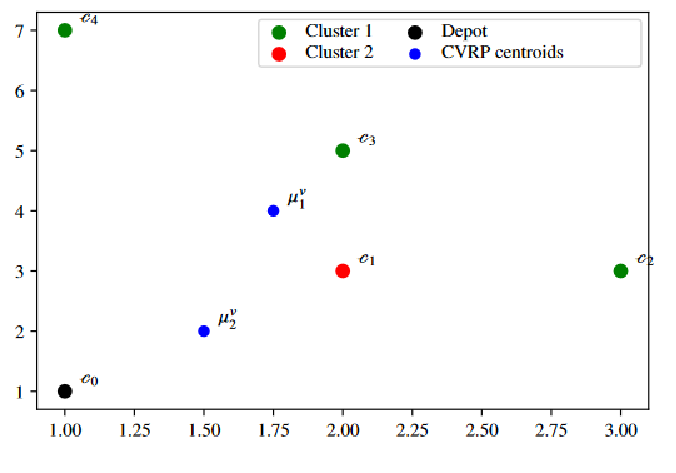} % Replace with your second image file
		\caption{Optimal $\mathcal{CVRP}$.}
		\label{fig2}
	\end{minipage}
\end{figure}

\begin{figure}[h!]
	\centering
	% Figure A
	\begin{minipage}{0.48\textwidth}
		\centering
		\includegraphics[width=\linewidth]{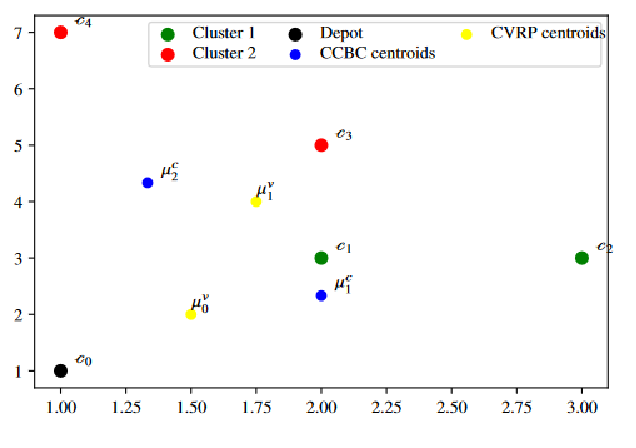} % Replace with your first image file
		\caption{Supperposition of $\mathcal{CCBC}$ and $\mathcal{CVRP}$ centroids.}
		\label{suppercent}
	\end{minipage}\hfill
	% Figure B
	\begin{minipage}{0.48\textwidth}
		\centering
		\includegraphics[width=\linewidth]{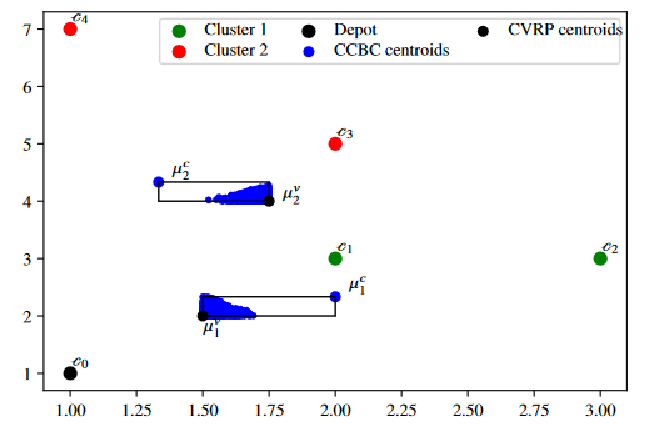} % Replace with your second image file
		\caption{Exploring nearest $\mathcal{CCBC}$ centroids.}
		\label{explocent}
	\end{minipage}
	\begin{minipage}{0.48\textwidth}
		\centering
		\includegraphics[width=\linewidth]{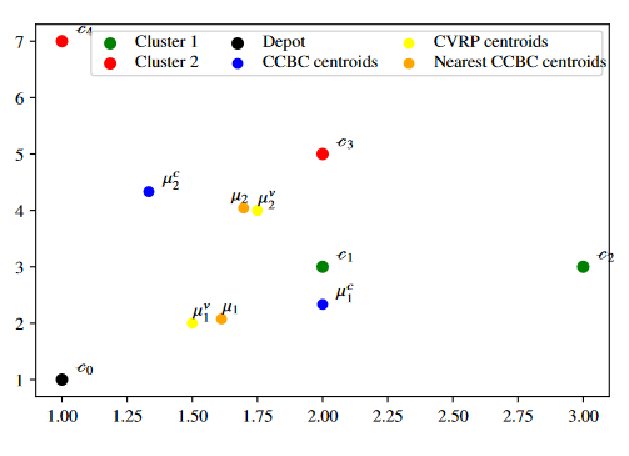} % Replace with your second image file
		\caption{Nearest $\mathcal{CCBC}$ centroids.}
		\label{NearestCCbc}
	\end{minipage}
\end{figure}

%%%%%%%%%%%%%%%%%%%%%%%%%%%%%%%%%%%%%%%%%%%%%%%%%

\subsection{Generalization of the connection between \texorpdfstring{$\mathcal{CCBC}$ and $\mathcal{CVRP}$}{}}
The idea behind carrying out this experimental study is to show that the $\mathcal{CVRP}$ solution is reachable when selecting the centroids from a specific region in the space. In general, knowing the optimal routes shape, we can formulate the task of finding the nearest centroids combination from $\mathcal{CCBC}$ ones that lead to this optimal $\mathcal{CVRP}$ solution~\eqref{eq26}--\eqref{eq28}. It should be noted that each $S_k \in \mathcal{S}$ encompasses customers from the route $\rho_k$ within the $\mathcal{CVRP}$ optimal solution, i.e., $S_k=\{\mathscr{c}_i : \mathscr{c}_i \in \rho_k \}$. 
\begin{align}
	\min_{\mathscr{x}_{\mu_k},\mathscr{y}_{\mu_k} \in \mathcal{X}} & \,\, \sum_{k=1}^K (\mathscr{x}_{\mu_k}- \mathscr{x}_{\mu_k^c})^2+(\mathscr{y}_{\mu_k}- \mathscr{y}_{\mu_k^c})^2 & & \label{eq26}\\
	\text{s.t.} & \,\,(\mathscr{x}_{\mu_k}- \mathscr{x}_j)^2+(\mathscr{y}_{\mu_k}- \mathscr{y}_j)^2 \leq \min_{k' \in \mathcal{K}-\{k\}}(\mathscr{x}_{\mu_{k'}} - \mathscr{x}_j)^2+(\mathscr{y}_{\mu_{k'}}- \mathscr{y}_j)^2 && \forall \mathscr{c}_j  \in \mathcal{S}_k, \forall  k  \in \mathcal{K}\label{eq27}\\
	& \,\, \mathscr{x}_{\mu_k},\mathscr{y}_{\mu_k} \geq 0 &&\forall  k \in  \mathcal{K}  \label{eq28}
\end{align}

%%%%%%%%%%%%%%%%%%%%%%%%%
In what follows, we introduce a theoretical characterization of the centroids regions that yield an optimal solution for the $\mathcal{CVRP}$.
\begin{definition}
	In the context of a $\mathcal{CCBC}$ problem, a \textit{strict centroid} refers to a centroid $\mu_{k^{\ast}}$ of cluster $ S_{ k^{\ast}} \in \mathcal{S}=\{S_1,S_2, \ldots, S_K\}$, such that:
	\begin{align}
		\left\{
		\begin{array}{ll}
			\ d(\mu_{k^{\ast}},\mathscr{c}_i)< d(\mu_{k},\mathscr{c}_i) \,\,\,\,\, \forall \mathscr{c}_i  \in S_{k^{\ast}},\forall  k  \in \mathcal{K}-\{k^{\ast}\}  \\
			\   d(\mu_{k^{\ast}},\mathscr{c}_j)> d(\mu_{k},\mathscr{c}_j) \,\,\,\,\, \forall  \mathscr{c}_j  \in S_{k},\forall k  \in \mathcal{K}-\{k^{\ast}\}
		\end{array}
		\label{eq31}
		\right.
	\end{align}
	This implies that there are no customers located at an identical euclidean distance from this centroid and any other centroid.
\end{definition}

\begin{remark}
	As the incoming theoretical results rely on this particular definition, we aimed to assess the occurrence of strict centroids within $\mathcal{CVRP}$ instances. To achieve this, we conduct a straightforward experiment using benchmark instances from groups \textit{A, B, P, E} presented in Section \ref{compres}. Concretely, the developped procedure involves the following steps:
	\begin{itemize}
		\item For each instance from the aforementioned groups, where the optimal solution is known we determine centroids combination that yields this solution. In other words, centroids combination that satisfies Equations \eqref{eq26}--\eqref{eq28}.
		\item We check for every centroid within the combination if Equation \eqref{eq31} is verified.  
	\end{itemize}
	As a result of this study, we can confirm that for every instance, all the found centroids can be denoted as strict.
	
\end{remark}
\begin{theorem}
	\label{pythagorean}
	In the context of the $\mathcal{CVRP}$, if there exists a set of centroids combination $\Omega = \{\mu_1, \mu_2, \ldots,$\break $\mu_K \}$ that yields an optimal $\mathcal{CVRP}$ solution and includes a \textit{strict centroid} $\mu_{k^{\ast}}$, then there exists an infinite number of centroids combinations that also provide an optimal solution for the $\mathcal{CVRP}$.
\end{theorem}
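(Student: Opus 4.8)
The plan is to argue that a centroid combination influences the optimal $\mathcal{CVRP}$ solution \emph{only} through the partition of customers it induces by the nearest-centroid rule, and that the strictness built into $\mu_{k^{\ast}}$ makes this partition stable under small perturbations of $\mu_{k^{\ast}}$. Concretely, since each cluster $S_k$ is exactly the set of customers of the optimal route $\rho_k$, and the routes (hence the $\mathcal{CVRP}$ objective) are recovered from the clusters by solving a $\mathcal{TSP}$ inside each one, it suffices to exhibit infinitely many centroid combinations that reproduce the same assignment of customers to clusters as $\Omega$.

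First I would freeze every centroid $\mu_k$ with $k \neq k^{\ast}$ and let only the $k^{\ast}$-th centroid move to a candidate point $\mu \in \mathcal{X}$. Replacing $\mu_{k^{\ast}}$ by $\mu$ changes only the distances $d(\mu, \mathscr{c})$, leaving every $d(\mu_k, \mathscr{c})$ with $k \neq k^{\ast}$ untouched. Consequently the assignment is preserved precisely when (i) every $\mathscr{c}_i \in S_{k^{\ast}}$ remains strictly closer to $\mu$ than to any other (fixed) centroid, i.e. $d(\mu, \mathscr{c}_i) < d(\mu_k, \mathscr{c}_i)$ for all $k \neq k^{\ast}$, and (ii) every $\mathscr{c}_j \in S_k$ with $k \neq k^{\ast}$ remains strictly farther from $\mu$ than from its own centroid, i.e. $d(\mu, \mathscr{c}_j) > d(\mu_k, \mathscr{c}_j)$. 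These are exactly the two families of inequalities of the \emph{strict centroid} definition \eqref{eq31}, read as conditions on $\mu$ and satisfied at $\mu = \mu_{k^{\ast}}$.

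The key observation is that each of these is a \emph{strict} inequality whose right-hand side is a constant (independent of $\mu$), while the left-hand side $d(\mu, \mathscr{c})$ is a continuous function of $\mu$. Hence condition (i) places $\mu$ inside an open disk centred at $\mathscr{c}_i$ of radius $d(\mu_k, \mathscr{c}_i)$, and condition (ii) places $\mu$ outside a closed disk centred at $\mathscr{c}_j$ of radius $d(\mu_k, \mathscr{c}_j)$; each such region is open. The set $R$ of admissible positions for the $k^{\ast}$-th centroid is the intersection of finitely many of these open regions (finitely many customers, finitely many clusters), so $R$ is open, and by \eqref{eq31} it contains $\mu_{k^{\ast}}$. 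An open set containing a point contains an entire ball $B(\mu_{k^{\ast}}, \varepsilon)$ for some $\varepsilon > 0$, and such a ball has infinitely many points. Each $\mu \in B(\mu_{k^{\ast}}, \varepsilon)$ yields a combination $\{\mu_1, \ldots, \mu_{k^{\ast}-1}, \mu, \mu_{k^{\ast}+1}, \ldots, \mu_K\}$ inducing the same partition as $\Omega$, hence the same optimal $\mathcal{CVRP}$ routes, which proves the claim.

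I expect the main obstacle to be the bookkeeping of step two: verifying that moving only $\mu_{k^{\ast}}$ cannot disturb the membership of a customer $\mathscr{c}_j$ of another cluster even when $\mathscr{c}_j$ is merely (weakly) tied between two centroids both different from $\mu_{k^{\ast}}$. The resolution is that such a tie does not involve $\mu_{k^{\ast}}$ and is therefore untouched by the move; the only new way $\mathscr{c}_j$ could be reassigned is by becoming closer to the moved centroid, and the strict inequality $d(\mu_{k^{\ast}}, \mathscr{c}_j) > d(\mu_k, \mathscr{c}_j)$ together with continuity rules this out for $\varepsilon$ small. A uniform choice of $\varepsilon$ is then immediate, taking the minimum of the finitely many positive gaps $\lvert d(\mu_{k^{\ast}}, \mathscr{c}) - d(\mu_k, \mathscr{c})\rvert$. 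Note that strictness is used only at $\mu_{k^{\ast}}$, which is exactly what the hypothesis of a \emph{strict centroid} supplies.
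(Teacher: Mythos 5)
Your proof is correct, and it reaches the conclusion by a genuinely different and cleaner route than the paper, although the two share the same core idea: only the nearest-centroid partition matters for the resulting $\mathcal{CVRP}$ routes, and strictness at $\mu_{k^{\ast}}$ leaves room to perturb that centroid without changing the partition. The paper's proof is constructive and computational: it identifies the customer $\mathscr{c}_f$ and centroid $\mu_e$ achieving the minimal slack $\beta$, moves $\mu_{k^{\ast}}$ along the segment toward $\mu_e$ as a convex combination parameterized by $\alpha$, and verifies the two families of inequalities separately --- via convexity of the squared-distance function $F_i$ for the customers of $S_{k^{\ast}}$, and via the triangle inequality for the customers of the other clusters --- obtaining an infinite one-parameter family and, in the subsequent graphical discussion, a disk of radius $\min(\psi_{\alpha_0},\beta)$. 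You instead observe that the admissible set for the moving centroid is a finite intersection of open regions (open disks around each $\mathscr{c}_i\in S_{k^{\ast}}$, complements of closed disks around the remaining customers), each cut out by a strict inequality with constant right-hand side that holds at $\mu_{k^{\ast}}$ by the strict-centroid hypothesis; openness immediately yields a ball $B(\mu_{k^{\ast}},\varepsilon)$ of valid positions, hence infinitely many combinations. What your approach buys is brevity and generality: you obtain a full two-dimensional neighborhood in one stroke, a uniform $\varepsilon$ from the finitely many positive gaps, and an argument that would extend verbatim to perturbing all strict centroids simultaneously; the trade-off is that it is less explicit about the size and shape of the region, which the paper exploits afterwards in its graphical characterization. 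Your treatment of the delicate point --- a customer weakly tied between two centroids other than $\mu_{k^{\ast}}$ is unaffected because those distances are constants under the move --- is also sound.
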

\begin{proof}
	Let's denote a centroids combination $\Omega = \{\mu_1, \ldots, \mu_{k^{\ast}}, \ldots,\mu_K \}$ that gives an optimal $\mathcal{CVRP}$ solution for the customers set $\mathcal{C}$ and includes a strict centroid $\mu_{k^{\ast}}$.
	Let's prove that: $\exists \,\,\mu_{k^{\ast}}^{\alpha}=(\mathscr{x}_{\mu_{k^{\ast}}^{\alpha}},\mathscr{y}_{\mu_{k^{\ast}}^{\alpha}}) $ near to $\mu_{k^{\ast}}=(\mathscr{x}_{\mu_{k^{\ast}}},\mathscr{y}_{\mu_{k^{\ast}}}) $ such that $\Omega = \{\mu_1, \ldots, \mu_{k^{\ast}}^{\alpha}, \ldots,\mu_K \}$ guarantees an optimal solution as well for the $\mathcal{CVRP}$. 
	
	To demonstrate this, we need to prove that:
	\begin{align}
		(\mathscr{x}_{\mu_{k^{\ast}}^{\alpha}}- \mathscr{x}_i)^2+(\mathscr{y}_{\mu_{k^{\ast}}^{\alpha}}- \mathscr{y}_i)^2 &\leq \min_{k \in \mathcal{K}-\{k^{\ast}\}}(\mathscr{x}_{\mu_k}- \mathscr{x}_i)^2+(\mathscr{y}_{\mu_k}- \mathscr{y}_i)^2 && \forall \mathscr{c}_i  \in \mathcal{S}_{k^{\ast}} 
		\label{32}
	\end{align}
	And 
	\begin{align}
		(\mathscr{x}_{\mu_k}- \mathscr{x}_j)^2+(\mathscr{y}_{\mu_k}- \mathscr{y}_j)^2 &\leq (\mathscr{x}_{\mu_{k^{\ast}}^{\alpha}}- \mathscr{x}_j)^2+(\mathscr{y}_{\mu_{k^{\ast}}^{\alpha}}- \mathscr{y}_j)^2 
		&& \forall \mathscr{c}_j  \in  \mathcal{S}_k, \forall k \in \mathcal{K}-\{k^{\ast}\}\label{33}
	\end{align}
	We define for $\mathscr{c}_i=(\mathscr{x}_i,\mathscr{y}_i), i \in \mathcal{C} $ and for $k \in K $: 
	\begin{align*}
		F_i(\mathscr{x}_{\mu_k},\mathscr{y}_{\mu_k})=(\mathscr{x}_{\mu_k}- \mathscr{x}_i)^2+(\mathscr{y}_{\mu_k}- \mathscr{y}_i)^2
	\end{align*}
	Let's put : 
	\begin{align*}
		\beta= \min_{\mathscr{c}_j \in \mathcal{S}_k,  k  \in \mathcal{K}-\{k^{\ast} \}} 
		\{  F_j(\mathscr{x}_{\mu_{k^{\ast}}},\mathscr{y}_{\mu_{k^{\ast}}}) -F_j(\mathscr{x}_{\mu_k},\mathscr{y}_{\mu_k}) \} > 0 \,\,\,\,\,\,\,\,(\text{strict centroids})
	\end{align*}
	Let's put as well :
	\begin{align*}
		f, e=argmin_{\mathscr{c}_j \in \mathcal{S}_k,  k  \in \mathcal{K}-\{k^{\ast} \}} 
		\{  F_j(\mathscr{x}_{\mu_{k^{\ast}}},\mathscr{y}_{\mu_{k^{\ast}}}) -F_j(\mathscr{x}_{\mu_k},\mathscr{y}_{\mu_k}) \} 
		\label{eq2}
	\end{align*}
	Let's choose point $ \mu_{k^{\ast}}^{\alpha}=(\mathscr{x}_{\mu_{k^{\ast}}^{\alpha}},\mathscr{y}_{\mu_{k^{\ast}}^{\alpha}})$ between $\mu_{k^{\ast}}=(\mathscr{x}_{\mu_{k^{\ast}}},\mathscr{y}_{\mu_{k^{\ast}}})$ and $\mu_e=(\mathscr{x}_{\mu_e},\mathscr{y}_{\mu_e})$ such that the distance between $\mu_{k^{\ast}}^{\alpha}$ and $\mu_{k^{\ast}}$ lower than $\beta$ as illustrated in Figure  \ref{choose}, for the point $\mathscr{c}_f \in \mathcal{S}_{e}$:
	\begin{figure}[H]
		\centering
		\begin{tikzpicture}
			% Define the coordinates
			\coordinate (P1) at (0,0); % Center of the circle
			\coordinate (if) at (2.4,2.4); % Point x_j
			\coordinate (Pe) at (2,0.7); % Point X_k (near the circle)
			\def\theta{1.5} % Radius of the circle
			% Draw the circle with center X_1 and radius beta
			\draw[dashed,red] (P1) circle (\theta) (P1) -- ++(80:\theta) node[midway, above right] {$\beta$};
			
			% Draw a point at the center of the circle (X_1)
			\fill (P1) circle (2pt);
			
			% Draw a point slightly outside the circle (x_j)
			\fill (if) circle (2pt);
			
			% Draw a point near the circle (X_k)
			\fill (Pe) circle (2pt);
			
			% Draw lines connecting X_j and X_k, X_k and X_1
			\draw[blue] (if) -- (P1) node[midway, below,left] {$\tilde{b}$};
			\draw (if) -- (Pe);
			\draw (Pe) -- (P1);
			
			% Label the points
			\node[below left] at (P1) {$\mu_{k^{\ast}}$};
			\node[above right] at (if) {$\mathscr{c}_f$};
			\node[below] at (Pe) {$\mu_e$};
			
			% Calculate the position of X'_1
			\coordinate (P1alpha) at ($(Pe)!0.5!(P1)$);
			
			% Draw X^{\alpha}_1 inside the circle
			\fill (P1alpha) circle (2pt);
			\draw[blue] (P1alpha) -- (if) node[midway, below] {$\tilde{c}$};
			\draw[blue] (P1alpha) -- (P1) node[midway, above] {$\tilde{a}$};
			
			% Label the point as P^{\alpha}_1
			\node[below] at (P1alpha) {$\mu_{k^{\ast}}^{\alpha}$};
			
			% Draw a magnifying glass effect
		\end{tikzpicture}
		\caption{Choosing point $\mu_{k^{\ast}}^{\alpha}=(\mathscr{x}_{\mu_{k^{\ast}}^{\alpha}},\mathscr{y}_{\mu_{k^{\ast}}^{\alpha}})$.}
		\label{choose}
	\end{figure}
	Consequently, we define :
	\begin{align*} 
		\mathscr{x}_{\mu_{k^{\ast}}^{\alpha}}= \alpha \mathscr{x}_{\mu_{k^{\ast}}} + (1-\alpha) \mathscr{x}_{\mu_e}\quad  \text{and} \quad  \mathscr{y}_{\mu_{k^{\ast}}^{\alpha}}= \alpha \mathscr{y}_{\mu_{k^{\ast}}} + (1-\alpha) \mathscr{y}_{\mu_e} \quad \text{such that } \alpha \in \left]0, 1\right[\
	\end{align*}
	We can choose any $\mu_{k^{\ast}}^{\alpha}=(\mathscr{x}_{\mu_{k^{\ast}}^{\alpha}},\mathscr{y}_{\mu_{k^{\ast}}^{\alpha}})$ inside the circle varying $\alpha \in \left]0, 1\right[$. For $\mathscr{c}_i \in \mathcal{S}_{k^{\ast}}$, we define: 
	\begin{equation*}
		\eta_i= F_i(\mathscr{x}_{\mu_e},\mathscr{y}_{\mu_e})- F_i(\mathscr{x}_{\mu_{k^{\ast}}},\mathscr{y}_{\mu_{k^{\ast}}}) > 0 \textit{et }  \zeta_i= \min_{k\in \mathcal{K}-\{k^{\ast}\}}\left[(\mathscr{x}_{\mu_k}- \mathscr{x}_i)^2+(\mathscr{y}_{\mu_k}- \mathscr{y}_i)^2\right]- F_i(\mathscr{x}_{\mu_{k^{\ast}}},\mathscr{y}_{\mu_{k^{\ast}}}) > 0
	\end{equation*}
	We choose $\mu_{k^{\ast}}^{\alpha}=(\mathscr{x}_{\mu_{k^{\ast}}^{\alpha}},\mathscr{x}_{\mu_{k^{\ast}}^{\alpha}})$ such that:
	\begin{align}
		l_i= (1-\alpha) \eta_i - \zeta_i &\leq 0 && \forall \mathscr{c}_i \in \mathcal{S}_{k^{\ast}} \label{eqf}
	\end{align}
	\begin{remark}
		The value of $l_i$ can be always negative by increasing $\alpha \in \left]0, 1\right[$, because 
		$\lim\limits_{\alpha\to 1^{-}} l_i= -\zeta_i <0$.
		Then, the point $\mu_{k^{\ast}}^{\alpha}=(\mathscr{x}_{\mu_{k^{\ast}}^{\alpha}},\mathscr{y}_{\mu_{k^{\ast}}^{\alpha}})$ becomes nearer to $\mu_{k^{\ast}}=(\mathscr{x}_{\mu_{k^{\ast}}},\mathscr{y}_{\mu_{k^{\ast}}})$, but it always remains inside the circle $(\mu_{k^{\ast}},\beta)$ in Figure \ref{choose}.
	\end{remark}
	It is clear according to Figure \ref{choose} and triangle inequality that :
	\begin{align*}
		\tilde{b}\leq \tilde{a}+\tilde{c} \implies \tilde{b}-\tilde{a}\leq \tilde{c} \implies \tilde{b}- \beta \leq \tilde{c}  \implies (\mathscr{x}_{\mu_e}- \mathscr{x}_f)^2+(\mathscr{x}_{\mu_e}- \mathscr{y}_f)^2 \leq (\mathscr{x}_{\mu_{k^{\ast}}^{\alpha}}- \mathscr{x}_f)^2+(\mathscr{y}_{\mu_{k^{\ast}}^{\alpha}}- \mathscr{y}_f)^2   
	\end{align*}
	Using the same process we can deduce that :
	\begin{align*}
		(\mathscr{x}_{\mu_k}- \mathscr{x}_j)^2+(\mathscr{x}_{\mu_k}- \mathscr{y}_j)^2 &\leq (\mathscr{x}_{\mu_{k^{\ast}}^{\alpha}}- \mathscr{x}_j)^2+(\mathscr{y}_{\mu_{k^{\ast}}^{\alpha}}- \mathscr{y}_j)^2   && \forall \mathscr{c}_j\in  S_k, \forall k \in \mathcal{K}-\{k^{\ast}\}
	\end{align*}
	Hence Equation \eqref{33} is verified, and let's prove Equation \eqref{32}. \\
  For $\mathscr{c}_i \in S_{k^{\ast}}$, we know that $F_i$ is a convex bi-variate function, because its hessian is :
	\begin{align*}
		\nabla^2F_i(\mathscr{x}_{\mu_k},\mathscr{y}_{\mu_k})& =
		\begin{pmatrix}
			2 & 0 \\
			0 & 2 
		\end{pmatrix} && \forall i \in \mathcal{C}
	\end{align*}
	Hence, 
	\begin{align*}
		&F_i(\alpha \mathscr{x}_{\mu_{k^{\ast}}}+ (1-\alpha) \mathscr{x}_{\mu_e},\alpha \mathscr{y}_{\mu_{k^{\ast}}} + (1-\alpha) \mathscr{y}_{\mu_e}) \leq \alpha F_i(\mathscr{x}_{\mu_{k^{\ast}}},\mathscr{y}_{\mu_{k^{\ast}}} ) + (1-\alpha) F_i(\mathscr{x}_{\mu_e},\mathscr{x}_{\mu_e}) \\
		&\implies F_i(\mathscr{x}_{\mu_{k^{\ast}}^{\alpha}},\mathscr{y}_{\mu_{k^{\ast}}^{\alpha}}) \leq \alpha F_i(\mathscr{x}_{\mu_{k^{\ast}}},\mathscr{y}_{\mu_{k^{\ast}}} ) + (1-\alpha) F_i(\mathscr{x}_{\mu_{k^{\ast}}},\mathscr{y}_{\mu_{k^{\ast}}} ) + (1-\alpha) \eta_i\\
		&\implies F_i(\mathscr{x}_{\mu_{k^{\ast}}^{\alpha}},\mathscr{y}_{\mu_{k^{\ast}}^{\alpha}}) \leq  F_i(\mathscr{x}_{\mu_{k^{\ast}}},\mathscr{y}_{\mu_{k^{\ast}}}) + (1-\alpha) \eta_i \\
		&\implies F_i(\mathscr{x}_{\mu_{k^{\ast}}^{\alpha}},\mathscr{y}_{\mu_{k^{\ast}}^{\alpha}})  \leq \min_{k\in \mathcal{K}-\{k^{\ast}\}}(\mathscr{x}_{\mu_k}- \mathscr{x}_i)^2+(\mathscr{y}_{\mu_k}- \mathscr{y}_i)^2 - \zeta_i + (1-\alpha) \eta_i \\
		&\implies F_i(\mathscr{x}_{\mu_{k^{\ast}}^{\alpha}},\mathscr{y}_{\mu_{k^{\ast}}^{\alpha}}) \leq \min_{k \in \mathcal{K}-\{k^{\ast}\}}(\mathscr{x}_{\mu_k}- \mathscr{x}_i)^2+(\mathscr{y}_{\mu_k}- \mathscr{y}_i)^2 \,\,\,\,\,\,\,\,\,\,\,\,\,\,\,\,\,\,\,\,\,\,\,\,\,\,\,\, ((1-\alpha) \eta_i - \zeta_i \leq 0 \text{ according to $\eqref{eqf}$})
	\end{align*}
	%\fbox{\parbox{14 cm}
		{Hence, Equation \eqref{32} is verified as well. Finally, one can deduce that $\Omega = \{\mu_1, \ldots, \mu_{k^{\ast}}^{\alpha}, \ldots,\mu_K \}$ guarantees an optimal solution as well. Then there exists an infinite number of centroids combinations that also provide an optimal solution for the $\mathcal{CVRP}$.}
		%}
	\end{proof}

	\paragraph{Graphical representaion :} Let's  plot graphically this sub-region, we define $\alpha_{0}$ such that: 
	\begin{equation}
		l_j= (1-\alpha_{0}) \eta_j - \zeta_j=0
	\end{equation}
	Let's consider $\mu_{k^{\ast}}^{\alpha_0}= (\mathscr{x}_{\mu_{k^{\ast}}^{\alpha_0}},\mathscr{x}_{\mu_{k^{\ast}}^{\alpha_0}})$, such that : 
	\begin{align*} 
		\mathscr{x}_{\mu_{k^{\ast}}^{\alpha_0}}= \alpha_0 \mathscr{x}_{\mu_{k^{\ast}}} + (1-\alpha_0) \mathscr{x}_{\mu_e} \quad \text{and} \quad \mathscr{y}_{\mu_{k^{\ast}}^{\alpha_0}}= \alpha_0 \mathscr{y}_{\mu_{k^{\ast}}} + (1-\alpha_0) \mathscr{y}_{\mu_e} \quad \text{such that } \alpha_0 \in \left[0, 1\right]\
	\end{align*}
	
	Let's denote the distance between points $\mu_{k^{\ast}}$ and $\mu_{k^{\ast}}^{\alpha_0}$ as $\psi_{\alpha_0}$. So we have two cases to choose point~$\mu_{k^{\ast}}^{\alpha}$:
	\begin{align*}
		\left
		\{
		\begin{array}{ll}
			\ \mu_{k^{\ast}}^{\alpha} \in \textbf{C}(\mu_{k^{\ast}},\psi_{\alpha_0})& \text{if} \,\,\,\psi_{\alpha_0} \leq \beta  \\
			\  \mu_{k^{\ast}}^{\alpha} \in \textbf{C}(\mu_{k^{\ast}},\beta) & \mbox{otherwise}
		\end{array}
		\right.
	\end{align*}
	
	Such that $\textbf{C}(O,\mathscr{r})$ is the circle of center $O$ and radius $\mathscr{r}$. Here is below the graphical representation if $\psi_{\alpha_0} \leq \beta$ in Figure \ref{finregion}. The  point $\mu_{k^{\ast}}^{\alpha}$ can be selected within the gray region.

	\begin{figure}[H]
		\centering
		\begin{tikzpicture}
			% Define the coordinates
			\coordinate (P1) at (0,0); % Center of the circle
			\coordinate (pj) at (2.4,2.4); % Point x_j
			\coordinate (Pe) at (2,0.7);
			% Point X_k (near the circle)
			\def\theta{1.5} % Radius of the circle
			\def\dalpha{1} % Radius of the new circle
			\def\opacity{0.5} % Opacity of the gray fill
			% Draw the first circle with center X_1 and radius beta
			\draw[dashed,red] (P1) circle (\theta) (P1) -- ++(60:\theta) node[midway, above right] {$\beta$};
			
			% Draw a point at the center of the circle (X_1)
			\fill (P1) circle (2pt);
			
			% Draw a point slightly outside the circle (x_j)
			\fill (pj) circle (2pt);
			
			% Draw a point near the circle (X_k)
			\fill (Pe) circle (2pt);
			
			% Draw lines connecting X_j and X_k, X_k and X_1
			\draw[blue] (pj) -- (P1) node[midway, below,left] {$\tilde{b}$};
			\draw (pj) -- (Pe);
			\draw (Pe) -- (P1);
			% Label the points
			\node[below left] at (P1) {$\mu_{k^{\ast}}$};
			\node[above right] at (pj) {$\mathscr{c}_f$};
			\node[below] at (Pe) {$\mu_e$};
			% Calculate the position of X'_1
			\coordinate (P1alpha0) at ($(Pe)!0.5!(P1)$);
			% Draw X^{\alpha_0}_1 inside the circle
			\fill (P1alpha0) circle (2pt);
			\draw[blue] (P1alpha0) -- (pj) node[midway, below] {$\tilde{c}$};
			\draw[blue] (P1alpha0) -- (P1) node[midway, above] {$\tilde{a}$};
			% Label the point as P^{\alpha_0}_1
			\node[below] at (P1alpha0) {$\mu_{k^{\ast}}^{\alpha_0}$};
			% Draw the second circle with center P_1 and radius d^{\alpha_0}
			\draw[dashed,blue,fill=gray,fill opacity=\opacity] (P1) circle (\dalpha) (P1) -- ++(110:\dalpha) node[midway, above left] {$\psi_{\alpha_0}$};
			% Draw a magnifying glass effect
		\end{tikzpicture}
		\caption{The region of $\mu_{k^{\ast}}^{\alpha}$ if $\psi_{\alpha_0} \leq \beta$.}
		\label{finregion}
	\end{figure}
	
	We can proceed similarly to define a region around each strict centroid within the combination $\Omega$. It is noteworthy that this proof characterizes only a subset of the region that yields an optimal $\mathcal{CVRP}$ solution. However, there may exist other, closer sub-regions that lead to equivalent outcomes.

		\section{Solution methodology} \label{solmeth}
		
		Given the highlighted connection between $\mathcal{CVRP}$ and $\mathcal{CCBC}$, our goal is to design a $\mathcal{CCBC}$ based framework for addressing $\mathcal{CVRP}$. In this section we first present a general overview of the proposed approach in Subsection \ref{general-overview}. Subsequently, we shed light on the methodological aspects of each component within the advocated framework, including the constrained centroid-based clustering, optimization, and re-optimization in Subsections~\ref{methodCCBC} and~\ref{methodopt}
		
		\subsection{Methodology overview}\label{general-overview}

		The proposed methodology is an extension of the \textit{cluster-first, route-second} heuristic for tackling the $\mathcal{CVRP}$. More specifically, we endow the original framework with improvements to ensure better quality and runtime results by leveraging the aforementioned connection between $\mathcal{CVRP}$ and $\mathcal{CCBC}$. In contrast to the well-known \textit{cluster-first, route-second}, the proposed approach includes three steps as highlighted in Flowchart \ref{framework}. Moreover, we label the proposed approach as \textit{Cluster \& Tune First, Route Second, Ruin \& Recreate Third } and abbreviate it as $\mathcal{CTR}3$.
\begin{figure} [h!]\centering
	\includegraphics*[width=.6\linewidth]{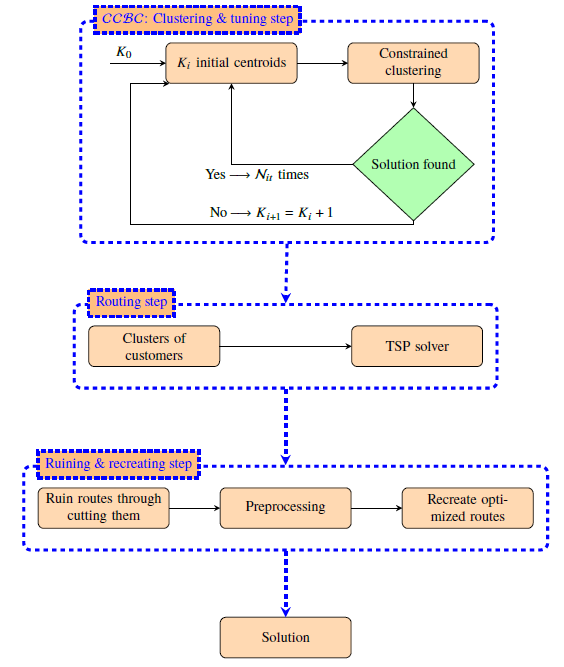}
	\caption{Flow chart of the $\mathcal{CTR}3$ framework.}\label{framework} 
\end{figure}

	\paragraph{Clustering \& tuning step : } The $\mathcal{CCBC}$ algorithm takes the customers raw data, including euclidean coordinates and demands as an input. Then it proceeds to partitioning customers into $K$ clusters. This step is carried out $\mathcal{N}_{it}$ times varying the initial centroids combination through a random multi-start procedure. This iterative approach serves a dual purpose:  first, it alleviates the local optimum impact, thereby ensuring better clustering results, and second, it aims at leveraging the connection between $\mathcal{CVRP}$ and $\mathcal{CCBC}$ by reaching centroids regions that can provide a $\mathcal{CVRP}$ near-optimal solution. Furthermore, the customers are assigned to clusters subject to the capacity constraint and based on a customized metric that prioritizes customers who are nearby and have a high demand first. Besides, the proposed clustering algorithm starts with a lower bound for the number $K$ of clusters and it is self-adjusted if no solution is found taking into account 
the capacity constraint. More details will be provided in Section \ref{methodCCBC}.

\paragraph{Routing step : } This second phase involves designing routes by ordering customers within each cluster, considering that each vehicle must begin and end at the depot. This task is handled using a $\mathcal{TSP}$ algorithm.

\paragraph{Ruining \& recreating step : } This third step aims at re-optimizing the routes using a cutting and relinking approach to design the final routes by means of an integer linear program. In brief, this phase is primarily focused on defining two sets of routes pieces using a route cutting process and subsequently combining elements from these sets using a relinking procedure to establish final routes. The nomenclature for this step draws inspiration from the \textit{Ruin and Recreate} algorithm. Further elaboration will be given in Section \ref{methodopt}.

		\subsection{\texorpdfstring{$\mathcal{CCBC}$:}{} Clustering and tuning step } \label{methodCCBC}

		We opt for a constrained centroid-based clustering, primarily due to the reason we outlined previously through the connection between $\mathcal{CCBC}$ and $\mathcal{CVRP}$. This latter highlights the feasibility of attaining an optimal or near-optimal solution by appropriately selecting centroids combination within specific regions.
		Furthermore, an additional justification arises from the intrinsic characteristics of the $\mathcal{CVRP}$ where the predefined lower bound for the number of clusters $K$ simplifies the clustering process. The proposed clustering approach keeps the core structure of the well-known \textit{k-means} alongside incorporating the needed adjustments to take into account:
		\begin{itemize}
			\item Self-adjustment of the total number of clusters $K$.
			\item Centroids multi-start initialization procedure.
			\item Assignment metric.
			\item Capacity constraint.
		\end{itemize}
		%Let $\mathcal{C}= ( \mathscr{c}_1,\mathscr{c}_2,…,\mathscr{c}_n) $
		%be the set of customers composed of bidimensional elements, $K$ the number of targeted clusters, and $D= ( q_1,q_2,…,q_n)$ the relevant customers demand and $\mathscr{c}_0$ indicates the depot. The clustering process aims to divide the given data set $C$ into $K$ disjoint groups  $\mathcal{C}={C_1,C_2,…,C_K}$.
		In the context of a constrained clustering, the designed clusters must meet a set of constraints. These are commonly known in the literature as two sets, namely: \textit{Must-link} constraints and \textit{Cannot-link} constraints \cite{basu2008constrained}. To elaborate, a \textit{must-link} constraint involving the tuple $(m,p)$ requires that $m$ is in cluster $S_i$ if and only if $p$ is in cluster $S_i$. In contrast, a \textit{cannot-link} constraint for the tuple $(m,p)$ stipulates that $m$ is in $S_i$ if and only if $p$ is not in $S_i$. In our particular case, a \textit{Cannot-link} constraint is implemented within the $\mathcal{CCBC}$ algorithm \ref{IR}. It explicitly addresses the impracticality of associating a set of customers within the same cluster such that the total demand exceeds the vehicle capacity. More accurately, the algorithm looks over the capacity constraint violation during each assignment operation and then acts accordingly. It should be emphasized that in the context of the $\mathcal{CCBC}$ algorithm the clusters are shaped with the objective of minimizing the \textit{withinss}. This latter is defined in our context as follows:
		\begin{equation*}
			\min \sum_{k=1}^K \sum_{\mathscr{c}_i \in S_k} d(\mathscr{c}_i, \mu_k^c )^2  \text{ such that }
			\mu_k^c= (\frac{\sum_{\mathscr{c}_i \in S_k} \mathscr{x}_i}{\left|S_k \right|},\frac{\sum_{\mathscr{c}_i \in S_k} y_i}{\left|S_k \right|}) \text{ and } \sum_{\mathscr{c}_i \in S_k} q_i \leq Q  \quad \forall  S_k  \subseteq \mathcal{C}
		%	\label{equa10}
		\end{equation*}
		
		In the case of a homogenous fleet, the number of targeted clusters is lower bounded by the following value:
		\begin{equation*}
			K_0=\lceil \frac{\sum_{i=1}^{N} q_i}{Q}\rceil
		%	\label{equa10}
		\end{equation*}
		Furthermore, Algorithm \ref{IR} starts the clustering with the predetermined lower bound. If it fails while designing the clusters, the number of required clusters is increased iteratively as indicated in the chart flow \ref{framework}. On the top of that, we rely on a customized assignment metric to cluster different customers. This latter is calculated for every customer with respect to each cluster by means of the following formula:
		\begin{equation}
			\mathcal{AM}(\mathscr{c}_i,S_k)=\frac{q_{i} }{d( \mathscr{c}_i,\mu_k^c)}
			\label{equa11}
		\end{equation}
		This coefficient prioritizes assigning near customers with high demand to the clusters. To be more specific, the clusters are primarily populated with customers whose assignment metric is greater. Whenever the vehicle capacity is reached, the remaining customers are subsequently assigned to the second nearest cluster in order. As marked out in Algorithm \ref{IR}, the clustering methodology is enriched with a multi-start process to start up the algorithm through randomly selecting centroids combination at each iteration. This procedure improves the clustering solution by mitigating the local optima impact and exploring the solution space with the goal of reaching centroids regions that can provide $\mathcal{CVRP}$ near-optimal solution as discussed in Section \ref{connect-cvrp_ccbc}. The clustering step yields feasible and unordered routes with respect to the capacity constraint. The designed algorithm generates $\mathcal{N}_{it}$ solutions as presented in Algorithm \ref{IR}. Each of these solutions corresponds to a specific starting centroids combination. Following that, the routing stage in flow chart \ref{framework} consists of applying a $\mathcal{TSP}$ solver within every cluster to provide valid routes starting from and ending at the depot. The $\mathcal{TSP}$ solver is obviously applied on every cluster within each generated solution among the $\mathcal{N}_{it}$ solutions. One can choose the best one that provides the minimum traveled distance. However, in our case, the proposed approach makes use of all $\mathcal{CCBC}$ solutions in the re-optimization step, as described in Section~\ref{methodopt}.
		
		\subsection{Ruining \& recreating step}\label{methodopt}
		
		As previousely stated, it is intractably hard to find the optimal solution for the $\mathcal{CCBC}$ taking into account the capacity constraint, particularly as the number of customers increases. Therefore the clustering approach explores various combinations of centroids to approach a near-optimal clustering result. Due to the heuristic nature of this process, it probably generates some inaccurately clustered customers. This fact paves the way to further improvements for the $\mathcal{CVRP}$ global solution. This latter depends directly on the customers assignment to clusters. A single misplaced customer can completely change the final solution.
		
		To minimize the effect of this issue, our methodology incorporates a ruining and recreating step. This approach phase makes use of the routes obtained directly after applying the $\mathcal{TSP}$ solver. Concretely, this step consists, on the one hand, in cutting every route into a pair of pieces and on the other hand, designing new routes by relinking the pieces of the routes in an optimal way. The ruining operation can be handled using a function that iterates each route and returns all the possible pieces through cutting in the different route edges. Consequently, there are many cutting configurations for a given route depending on the cutting position. Finally, recreating optimized routes can be crafted by relinking pieces of routes using an integer linear program to select the tuples to match. This task can be modeled visually as an assignment problem with constraints in Figure \ref{assign}. Following the problem description, we can formulate it using an MILP~\eqref{eq35}--\eqref{eq40}.
		\begin{align}
			\min & \,\,\sum_{o \in \mathcal{PR}_l}\sum_{t \in \mathcal{PR}_r }\delta_{ot}z_{ot} & &  \label{eq35}\\
			\text{s.t} &\,\,\sum_{o \in \mathcal{PR}_l}\sum_{t \in \mathcal{PR}_r}\gamma_{iot} z_{ot}=1 && \forall i\in\mathcal{C}\label{eq36}\\
			&\,\, \sum_{o \in \mathcal{PR}_l}z_{ot} =w_{t} && \forall t \in\mathcal{PR}_r \label{eq37}\\
			&\,\, \sum_{t \in \mathcal{PR}_r}z_{ot} =u_{o} && \forall  o \in\mathcal{PR}_l
			\label{eq38}\\
			&\,\, \tau_{ot} z_{ot}\leq Q && \forall  o \in \mathcal{PR}_l,  \forall t\in \mathcal{PR}_r
			\label{eq39}\\
			&\,\, z_{ot} \geq 0 ; u_{o},w_{t} \in \{0,1\} && \forall o \in \mathcal{PR}_l,  \forall  t\in \mathcal{PR}_r \label{eq40}
		\end{align}
	
	\begin{algorithm}[h!]\footnotesize
		\DontPrintSemicolon
		\KwData{$\mathcal{C}, Q, \mathcal{N}_{it}, G$ such that $G$ is a gap limit}
		\KwResult{$\Omega=(\mu_1,...\mu_K)$ with $S_k=\{\mathscr{c}_1,..,\mathscr{c}_{n_k}\}$}
		\Begin{
			$K_0 \longleftarrow \frac{\sum_{i=1 }^{N}q_i }{C}$, \,\, $i=1$.\;
			
			\For{$ i \leq N_{it}$}{
				$\Omega_{\mu}=\{\mu_1,\mu_2,...,\mu_K\} \longleftarrow Random(\mathcal{C},K)\,\,\,\,\,$ \tcp{ $Random$ is function randomly selects $K$ elements from customers set $\mathcal{C}$.}\;
				{ $gap \longleftarrow+ \inf $ }\;
				\nl\While{$gap \geq G$}{
					$ \mathcal{DM}\longleftarrow Distance Matrix(\mathcal{C},\Omega_{\mu}, euclidean) \,\,\,\,\,$ \tcp{ $Distance Matrix$ is a function returning an array of dimension $N\times K$ such that the element $(i,k)$ is the euclidean distance between $\mathscr{c}_i$ and centroid $\mu_k$.}\;
					\For{$S_k \in \mathcal{S}$}{ Decreasingly ordering $I_{k}=\{\mathscr{c}_1,..\mathscr{c}_{k'}\}$, set of points nearest to $S_{k}$ according to the priority $\mathcal{AM}(\mathscr{c}_j,S_{k}) \longleftarrow \frac{q_{j}}{\mathcal{DM}\,(j, k) }$\;
						{$\overset{\star}{C}\longleftarrow 0$      \,\,\tcp{$\overset{\star}{C}$ is the consumed capacity }\; }
						\For{$\mathscr{c}_i \in I_{k}$}{ 
							{$\overset{\star}{C} \longleftarrow \overset{\star}{C} + q_{i} $\; }
							\eIf{$\overset{\star}{C} < Q$}{ Assign $\mathscr{c}_i$ to $S_{k}$
							}{
								Assign $\mathscr{c}_i$ to $I_{k'}$, such that $S_{k'}$ is the nearest cluster to $\mathscr{c}_i$ after $S_{k}$\;
						}} 
					} Update centroids $\Omega$: $\mu_k\longleftarrow \frac{1}{\lvert S_k\rvert} \sum_{\mathscr{c}_j \in S_k} \mathscr{c}_j$ \\ $gap \longleftarrow UpdateGap (\Omega, S)$ \tcp{ $UpdateGap$ is a function that calculates $withness$ for current solution and than $gap$} }
				Return $\Omega$\\
				$i \longleftarrow i+1$ 
			}
		}
%	}
	\caption{$\mathcal{CCBC}$ Algorithm.}\label{IR}
\end{algorithm}

\begin{figure}[h!]
	\begin{tikzpicture}[x=2cm, y=2cm,scale=0.8, spy using outlines={circle, magnification=3, size=2cm, connect spies}]
		\tikzstyle{every node}=[draw, circle, fill=gray, inner sep=3pt,minimum size=18pt] 
		
		\draw (-0.5,0) node[minimum size=1.5cm](Output){Depot};
		\draw (7.5,0) node[minimum size=1.5cm](Input){Depot};
		\draw (2,2) node(1){$u_1$};
		\draw (2,1.5) node(2){$u_2$};
		\draw (2,1) node(3){$u_3$};
		\draw (2,0.5) node(4){$u_4$};
		\draw (2,0) node(5){$\dots$};
		\draw (2,-0.5) node(6){$\dots$};
		\draw (2,-1.5) node(7){$\dots$};
		\draw (2,-2) node(8){$u_n$};
		\draw (5,1.5) node(9){$w_1$};
		\draw (5,1) node(10){$w_2$};
		\draw (5,0.5) node(11){$w_3$};
		\draw (5,0) node(12){$w_4$};
		\draw (5,-0.5) node(13){$\dots$};
		\draw (5,-1) node(14) {$\dots$};
		\draw (5,-1.5) node(15){$w_m$};
		
		\draw[->] (Output) -- (1);
		\draw[->] (Output) -- (2);
		\draw[->] (Output) -- (3);
		\draw[->] (Output) -- (4);
		\draw[->] (Output) -- (5);
		\draw[->] (Output) -- (6);
		\draw[->] (Output) -- (7);
		\draw[->] (Output) -- (8);
		\draw[->] (1) -- (9);
		\draw[->] (1) -- (11);
		\draw[->] (1) -- (15);
		\draw[->] (2) -- (10);
		\draw[->] (2) -- (12);
		\draw[->] (3) -- (13);
		\draw[->] (4) -- (14);
		\draw[->] (5) -- (15);
		\draw[->] (6) -- (9);
		\draw[->] (6) -- (14);
		\draw[->] (7) -- (13);
		\draw[->] (8) -- (9);
		\draw[->] (8) -- (15);
		\draw[->] (9) -- (Input);
		\draw[->] (10) -- (Input);
		\draw[->] (11) -- (Input);
		\draw[->] (12) -- (Input);
		\draw[->] (13) -- (Input);
		\draw[->] (14) -- (Input);
	\end{tikzpicture}
	\caption{Pieces of routes assignment problem.} \label{assign}
\end{figure}
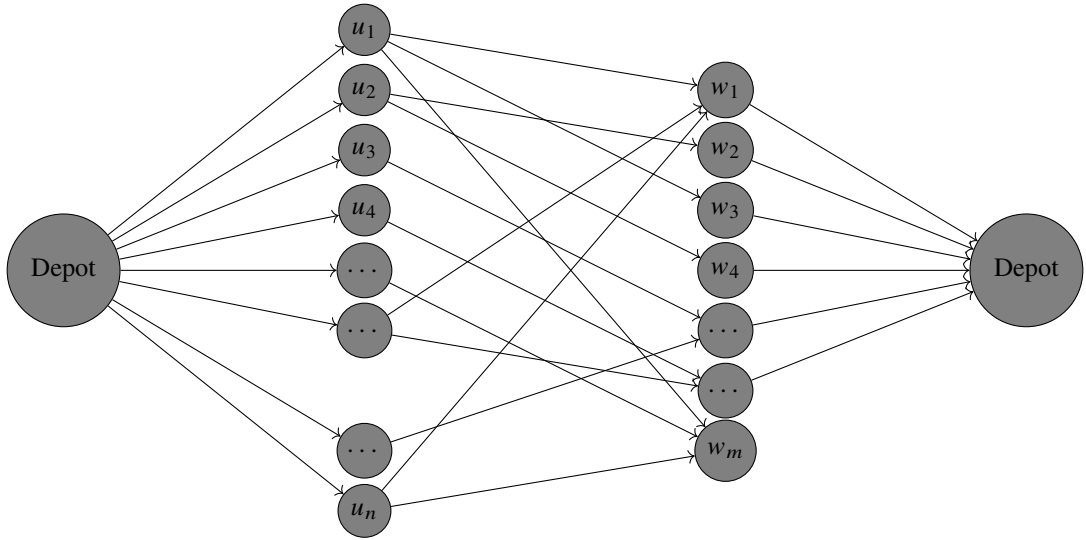

The objective function \eqref{eq35} minimizes the total traveled distance. Constraint \eqref{eq36} guarantees that each customer is visited exactly once. Constraints \eqref{eq37}, \eqref{eq38} are the flow conservation constraints. Finally, Constraint~\eqref{eq40} establishes the variables nature.

For the ruin \& recreate step, we apply some specific preprocessing strategies to reduce the runtime required by the MILP above to get an optimal solution. Explicitly, it mainly aims at network reduction by removing the infeasible edges. In our case,  an infeasible edge is triggered in the following cases: 
%\begin{enumerate}
	\begin{itemize}
		\item \textbf{Common client: } this constraint consists in eliminating arcs if it matches two pieces of routes that share a common client:
		\begin{equation}
			prl_{o} \cap prr_{t} \neq \emptyset \implies z_{ot}=0
		\end{equation}
		\item   \textbf{Exceeding the vehicle capacity: } eliminate an arc if the total demand of two matched pieces of routes exceed the truck capacity.
		\begin{equation}
			\tau_{o}+\tau_{t} > Q \implies z_{ot}=0
		\end{equation}
		
		\item \textbf{Exceeding the global capacity gap : } we define here two notions, namely: the global capacity gap $\mathcal{CG}_{glob}$, and the arc capacity gap
		$\mathcal{CG}_{ot}$ for the pieces of routes $(prl_{o},prr_{t})$.
		\begin{enumerate}
			\item $\mathcal{CG}_{glob}$ : this means the gap between the total demand and the total capacity of all used vehicles.
			\item $\mathcal{CG}_{ot}$ : this refers to the gap between the two linked pieces of routes total demand $(o,t)$ and vehicle capacity.
		\end{enumerate}
		Consequently, this remark eliminates arcs according to the following equation:
		\begin{equation}
			\mathcal{CG}_{ot}> \mathcal{CG}_{glob} \implies z_{ot}=0
		\end{equation}
		such that, 
		\begin{equation}
			\mathcal{CG}_{glob} = K\times Q - \sum_{i=1}^N q_i \quad \textbf{and} \quad \mathcal{CG}_{ot}=Q - (\tau_{o}+\tau_{t})
		\end{equation}
		As mentioned above $K$ is the number of clusters, $D$ refers to the customer's total demand, $Q$ is the vehicle capacity.
	\end{itemize}
%\end{enumerate}

\section{Experimentation} \label{exper}
In this section, we present the computational experiments conducted on $\mathcal{CVRP}$ known instances in the literature. Explicitly, in Subsection \ref{compres}, we introduce the test plan and  report the computational results that arise from this experimentation. These experiments are compared to the best-known solutions in the literature \cite{lysgaard2004new}. More than that, we extend the benchmark to include the results from\cite{ewbank2019capacitated} and \cite{ewbank2016unsupervised} since they rely on a similar methodology. The subsequent Subsections  \ref{multi-start-impact}, \ref{am-impact}, and \ref{reopt-impact} are dedicated to a post-computational analysis  with a primary focus on elucidating the effectiveness of different components within the proposed framework.

\subsection{Computational results} \label{compres}
As stated before, the targeted experiments involve existing $\mathcal{CVRP}$ instances in the literature, namely: groups $A$, $B$ and $P$ from \cite{augerat1995computational}, and group $E$ from \cite{christofides1969algorithm}. It is noteworthy that these instances are small and medium-sized and each one is uniquely identified by a nomenclature convention denoted as \textit{$G-nx_1-kx_2$}, which means a $\mathcal{CVRP}$ instance from group $G$ with $x_1$ customers and the optimal solution corresponds to $x_2$ vehicles.

The proposed framework is implemented using Python. All experiments are carried out on a 3.20GHz Intel(R) Core(TM) i7-8700
processor, with 64GiB System memory, using a Linux operating system. The Integer Linear Program is solved using the IBM CPLEX
Commercial Solver (version 12.9.0.0). We use the Pulp library (version 2.7.0) to communicate with CPLEX solver from Python.

We compare the proposed approach results to the baselines approaches from the literature, namely \cite{lysgaard2004new,ewbank2019capacitated}, and \cite{ewbank2016unsupervised} using the following metrics : relative gap denoted by $\mathcal{GAP}_r$, runtime, and number of optimal solutions obtained per approach $\mathcal{N}^{opt}$. The relative gap for instance $I$ is calculated with regard to the best-known solution, following the formula introduced below :
\begin{equation}
	\mathcal{GAP}_r(I)=\frac{\mathcal{SOL}(I)-\mathcal{BN}(I)}{\mathcal{BN}(I)} \times 100
\end{equation}
Such that, $\mathcal{SOL}(I)$ is the value of the $\mathcal{CVRP}$ solution for instance $I$ using the proposed approach and $\mathcal{BN}(I)$ is the value of the best-known solution for the same instance. Table \ref{A} thoroughly reports the results obtained for groups $A$, $B$, $P$, $E$ using the proposed  approach alongside with the ones provided by the work of \cite{ewbank2019capacitated,ewbank2016unsupervised}. For all these three approaches, we calculate $\mathcal{GAP}_r(I)$ with regard to the best known solution $\mathcal{BN}(I)$ \cite{lysgaard2004new}. Table \ref{recap-gap} sums up these results average for each approach.  Notably, the proposed methodology significantly outperforms other baselines, achieving an average relative gap $\overline{\mathcal{GAP}_r}$ of 1.07\% with respect to the best-known solution. 

We can derive alternative comparison insights concerning the three approaches based on the number of optimal solutions $\mathcal{N}_{opt}$ reached by each one as introduced in Table \ref{recap-gap}. Considering this new metric, the proposed approach considerably outperforms the other ones as the optimum is attained for 9 instances. This fact confirms the relevance of the additional techniques we incorporate to improve $\mathcal{CCBC}$ framework, namely : multi-start initial centroids, assignment metric, re-optimization procedure. The impact of this improvements strategies will be elaborated in Subsections \ref{multi-start-impact}, \ref{am-impact}, and \ref{reopt-impact}.

\begin{table}[h!]
	\begin{tabular}{l l l} 
		\toprule
		\textit{\textbf{Method}}& $\overline{\mathcal{GAP}_r}$ (\%)& $\mathcal{N}^{opt}$ \\
		\midrule
		$\mathcal{CTR}3$ & 1.07&9 \\
		\textit{\textbf{Ewbank and al. \cite{ewbank2019capacitated}}} & 1.58&1 \\
		\textit{\textbf{Ewbank and al. \cite{ewbank2016unsupervised}}} &3.45&2  \\
		\bottomrule
	\end{tabular}
	\caption{ $\overline{\mathcal{GAP}_r}$ and $\mathcal{N}^{opt}$ reached by each approach.}
	\label{recap-gap}
\end{table}

For better analysis, we convert this numerical comparison to visual plots. Observing Figure \ref{distplotgap}, one can get an idea about the relative gap distribution among the three approaches. The one related to the proposed approach is approximately similar to a Gaussian curve. More than that, it looks more symmetric and concentrated around the mean, unlike other approaches. The box plot in Figure \ref{boxplotgap} provides an additional perspective on the performance of these three approaches. In detail, our approach outperforms the other ones in terms of the upper quartile. In contrast to the proposed approach, one can notice that the remaining approaches can lead to some isolated $\mathcal{GAP}_r$ values according to their box plot. According to this analysis, one can conclude that our methodology presents less variability in terms of $\mathcal{GAP}_r$. This characteristic holds an advantageous practical value in real-world applications.
Furthermore, we can point out according to Table \ref{table-runtime} that the proposed approach provides $\mathcal{CVRP}$ solutions within reasonable timeframe, specifically, a maximum of 240 seconds to solve \textit{E-n76-k7}. These runtime values using $\mathcal{CTR}3$ are competitive with those of \cite{ewbank2019capacitated,ewbank2016unsupervised}.

%%\begin{figure}[ht]
%% \includegraphics[width=120mm]{}
%%\caption{Density plot results for the three approaches}
%%\label{fig:img1}
%%\end{figure}
%%\begin{figure}[ht]
%%  \includegraphics[width=120mm]{}
%% \caption{Box plot results for the three approaches}
%% \label{fig:img2}
%%\end{figure}
%% This file was created with tikzplotlib v0.10.1.

\begin{figure}[H]
	\centering
	 \includegraphics[width=0.9\linewidth]{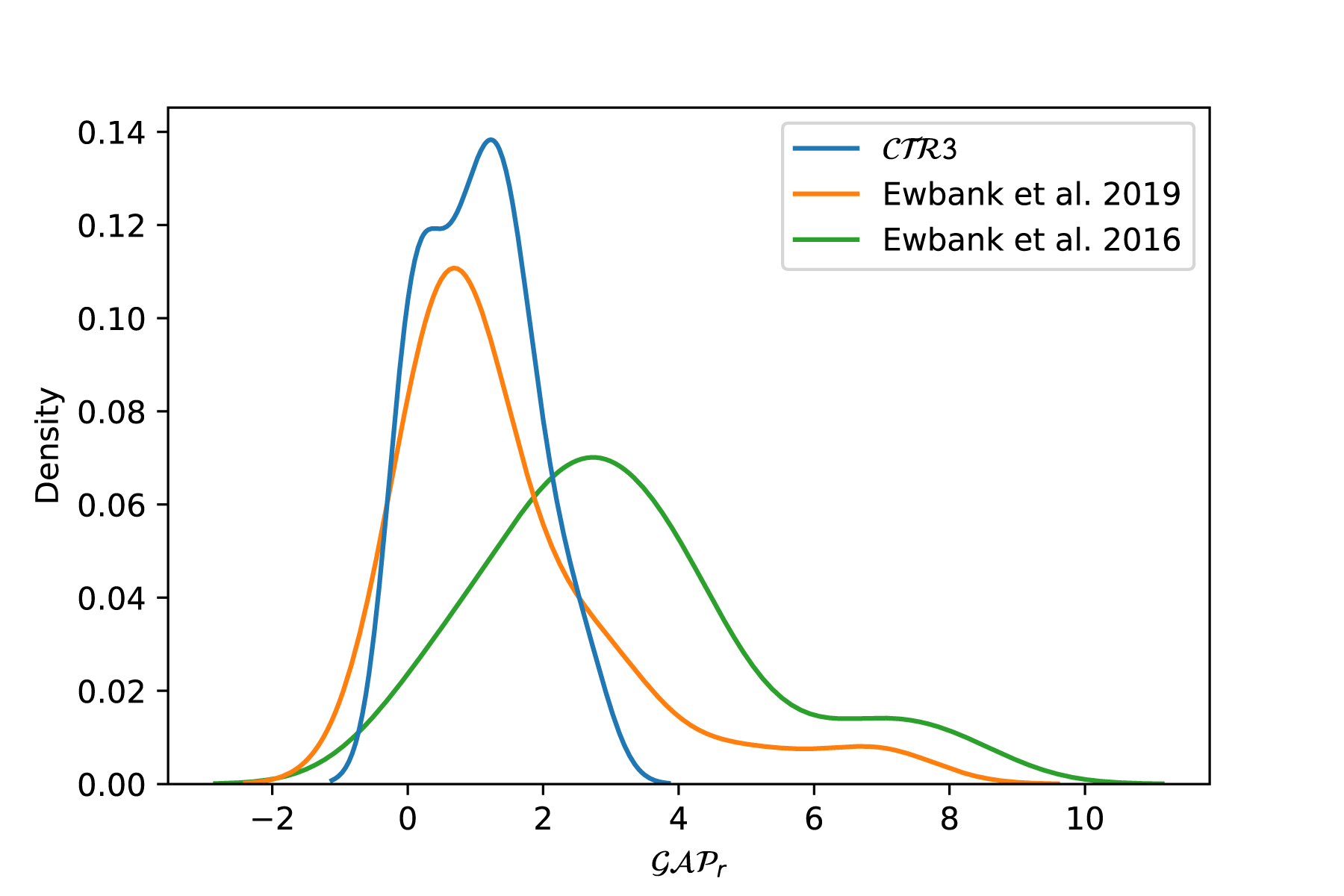}
	\caption{Distribution plot of $\mathcal{GAP}_r$ for the three approaches}
	\label{distplotgap}
\end{figure}
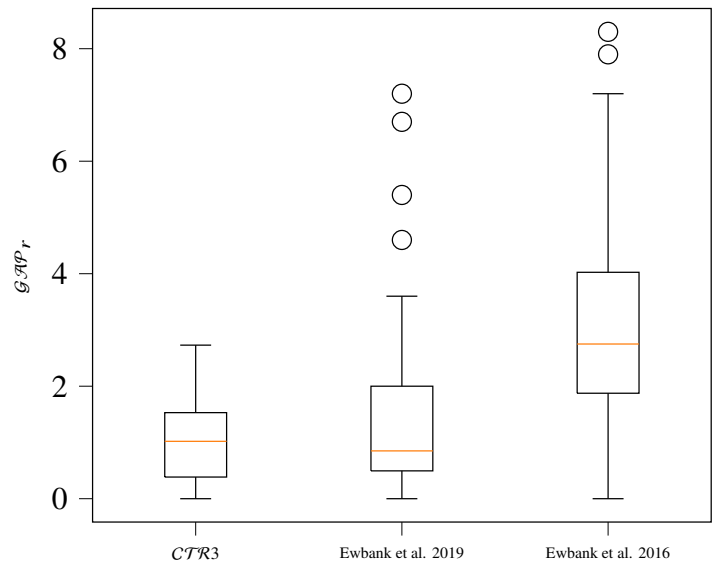
\begin{figure}[H]
	\begin{tikzpicture}[scale=1.2]
		
		\definecolor{darkgray176}{RGB}{176,176,176}
		\definecolor{darkorange25512714}{RGB}{255,127,14}
		
		\begin{axis}[
			tick align=outside,
			tick pos=left,
			title={},
			x grid style={darkgray176},
			xmin=0.5, xmax=3.5,
			xtick style={color=black},
			xtick={1,2,3},
			xticklabels={\tiny{$\mathcal{CTR}3$},\tiny{Ewbank~{et al.} 2019},\tiny{Ewbank~{et al.} 2016}},
			y grid style={darkgray176},
			ylabel={\tiny{$\mathcal{GAP}_r$}},
			ymin=-0.415, ymax=8.715,
			ytick style={color=black}
			]
			\addplot [black]
			table {%
				0.85 0.385
				1.15 0.385
				1.15 1.53
				0.85 1.53
				0.85 0.385
			};
			\addplot [black]
			table {%
				1 0.385
				1 0
			};
			\addplot [black]
			table {%
				1 1.53
				1 2.73
			};
			\addplot [black]
			table {%
				0.925 0
				1.075 0
			};
			\addplot [black]
			table {%
				0.925 2.73
				1.075 2.73
			};
			\addplot [black]
			table {%
				1.85 0.495
				2.15 0.495
				2.15 2
				1.85 2
				1.85 0.495
			};
			\addplot [black]
			table {%
				2 0.495
				2 0
			};
			\addplot [black]
			table {%
				2 2
				2 3.6
			};
			\addplot [black]
			table {%
				1.925 0
				2.075 0
			};
			\addplot [black]
			table {%
				1.925 3.6
				2.075 3.6
			};
			\addplot [black, mark=o, mark size=3, mark options={solid,fill opacity=0}, only marks]
			table {%
				2 5.4
				2 7.2
				2 6.7
				2 4.6
			};
			\addplot [black]
			table {%
				2.85 1.875
				3.15 1.875
				3.15 4.025
				2.85 4.025
				2.85 1.875
			};
			\addplot [black]
			table {%
				3 1.875
				3 0
			};
			\addplot [black]
			table {%
				3 4.025
				3 7.2
			};
			\addplot [black]
			table {%
				2.925 0
				3.075 0
			};
			\addplot [black]
			table {%
				2.925 7.2
				3.075 7.2
			};
			\addplot [black, mark=o, mark size=3, mark options={solid,fill opacity=0}, only marks]
			table {%
				3 7.9
				3 8.3
			};
			\addplot [darkorange25512714]
			table {%
				0.85 1.02
				1.15 1.02
			};
			\addplot [darkorange25512714]
			table {%
				1.85 0.85
				2.15 0.85
			};
			\addplot [darkorange25512714]
			table {%
				2.85 2.75
				3.15 2.75
			};
		\end{axis}
		
	\end{tikzpicture}
	\caption{Boxplot of $\mathcal{GAP}_r$ for the three approaches.}
	\label{boxplotgap}
\end{figure}

\clearpage
\newpage
\begin{sidewaystable}[h!]
	\centering
	\resizebox{\textwidth}{!}{
		\begin{tabular}{cc cc cc cc cc cc cc cc} 
			\toprule
			\multicolumn{1}{c}{Instance} & 
			\multicolumn{2}{c}{$\mathcal{CTR}3$} & 
			\multicolumn{2}{c}{Ewbank~{et~al.}  2019~\cite{ewbank2019capacitated}} &
			 \multicolumn{2}{c}{Ewbank~{et~al.} 2016~\cite{ewbank2016unsupervised}} &
			\multicolumn{1}{C{1.3cm}}{Lysgaard {et~al.}~\cite{lysgaard2004new}}&
			\multicolumn{1}{c}{Instance}& 
			\multicolumn{2}{c}{$\mathcal{CTR}3$}& 
			\multicolumn{2}{c}{Ewbank~{et al.} 2019 \cite{ewbank2019capacitated}}& 
			\multicolumn{2}{c}{Ewbank~{et al.} 2016 \cite{ewbank2016unsupervised}} & 
			\multicolumn{1}{C{1.3cm}}{Lysgaard {et~al.}~\cite{lysgaard2004new}}
	%		&\multicolumn{1}{c}{} 
			\\
			\cmidrule(rl){2-3} \cmidrule(rl){4-5} \cmidrule(rl){6-7}\cmidrule(rl){10-11}\cmidrule(rl){12-13}\cmidrule(rl){12-13} \cmidrule(rl){14-15}
			& {$\mathcal{SOL} (I)$} & {$\mathcal{GAP}_r$ (\%)} &  {$\mathcal{SOL} (I)$} & {$\mathcal{GAP}_r$ (\%)}&{$\mathcal{SOL} (I)$} & $\mathcal{GAP}_r$ (\%) & & &{$\mathcal{SOL} (I)$}& $\mathcal{GAP}_r$ (\%) & {$\mathcal{SOL} (I)$} & $\mathcal{GAP}_r$ (\%)& {$\mathcal{SOL} (I)$} & $\mathcal{GAP}_r$ (\%)& \\
			\midrule
			A-n32-k5 &791& 0.88& 787& 0.4& 812& 3.6   &784& B-n41-k6 &  833 & 0.48& 834 &0.6 &851 &2.7&  829  \\ 
			A-n33-k5 &674  &1.92& 662& 0.2 &680& 2.9 & 661&B-n43-k6 & 754& 1.59& 748 &0.8 &767& 3.4  & 742 \\
			A-n33-k6 & 745 &0.40& 745& 0.4 &756 &1.9 & 742& B-n44-k7 & 917 & 0.87& 914& 0.6 &928& 2.1  &	909\\
			A-n34-k5 & 788  & 1.26& 786& 1.0& 789& 1.4 &778 & B-n50-k7 & 747 & 0.80& 744& 0.4 &794& 7.2  & 741\\
			A-n36-k5& 808  & 1.11& 802& 0.4& 816& 2.1 & 799 & B-n52-k7 & 753& 0.79 & 752& 0.7 & 766 & 2.5 & 747\\
			A-n37-k5 & 679 & 1.47&672 &0.4& 693& 3.6 & 669& B-n63-k10 &1538 & 2.73& 1506& 0.7& 1566& 4.7  & 1496\\ 
			A-n37-k6 & 963 & 1.45&  1.00& 5.4& 976& 2.8 & 949&E-n22-k4 	& 375&0.00& 402& 7.2& 375& 0.0 &375 \\
			A-n38-k5& 746 &  2.14& 750& 2.7& 747&2.3 & 730& E-n23-k3& 569  &  0.00& 569& 0.0 &569& 0.0 &569\\
			A-n39-k5&833 & 1.32& 829 &0.9 &849& 3.3 & 822 & E-n30-k3&557 &4.12& 539 &0.9 &576 &7.9 &534\\
			A-n39-k6 & 845&  1.65&835& 0.5 &845 &1.7 &  831&E-n33-k4& 846&1.30& 837& 0.2 &845& 1.2 &835\\ 
			A-n44-k6 & 937&0.00& 1000& 6.7 &964 &2.9  &937& E-n51-k5&521 & 0.00& 535 &2.7& 524& 0.6 &521\\ 
			A-n48-k7 & 1091 & 1.64& 1083& 0.9& 1098 &2.3 &1073& E-n76-k7&696& 2.01& 692& 1.5& 721& 5.7 &682\\
			A-n53-k7 &  1023 &  1.27&  1021 &1.1& 1094 & 8.3 &  1010&P-n20-k2 & 216 & 0.00 & 217& 0.5& 218& 0.9 & 216\\ 
			A-n54-k7 & 1181 & 1.18& 1188 & 1.8& 1198 &2.7  & 1167&P-n21-k2 & 211 & 0.00 & 217& 2.8 &219& 3.8 & 211\\
			A-n63-k10 & 1320 & 0.45& 1329 &1.1 &1367& 4.0  &1314&P-n22-k2 & 216  & 0.00& 217& 0.5 &217 &0.5  & 216  \\ 
			A-n64-k9 & 1422 &1.47& 1438 &2.6 &1460 & 4.2  &1401 &P-n40-k5& 458& 0.00& 461& 0.7& 468 &2.2  & 458\\ 
			A-n69-k9 &	1171 & 1.02& 1179& 1.7& 1209& 4.3 & 1159&P-n45-k5 	& 510 & 0.00& 513 &0.6 &512 &0.4  & 510\\ 
			A-n80-k10 &1780&	0.95& 1794&  1.8 & 1877 &6.5  &1763&P-n50-k7 	& 560& 1.07& 560 &1.1& 579& 4.5  & 554\\ 
			B-n34-k5 & 789 & 0.12& 793 &0.6& 802& 1.8  & 788&P-n55-k10 & 698 & 0.57& 700& 4.6 &716&7.0 & 694  \\
			B-n35-k5 & 968 & 1.34&  956& 0.1 &980& 2.6    & 955&P-n76-k4 	& 608 & 2.40 & 610 &3.6 &605& 2.7& 593\\
			B-n38-k6 &  808 & 0.37& 808& 0.4& 835 &3.7 & 805&P-n76-k5 & 641 & 2.18& 644& 2.7& 636& 1.4 & 627\\
			B-n39-k5 & 557 & 1.43& 553& 0.7& 568& 3.5& 549&P-n101-k4 & 693 & 1.73& 702 &3.1 &709 &4.1 &681 \\	
			\bottomrule
		\end{tabular}
	}
	\caption{Comparison between $\mathcal{CTR}3$ results and baselines from the literature in terms of $\mathcal{GAP}_r$.}
	\label{A}
\end{sidewaystable} 

\clearpage
\newpage
\begin{table}[h!]
	\centering
	\setlength{\tabcolsep}{3pt} 
		\footnotesize
%	\resizebox{\textwidth}{!}{
		\begin{tabular}{cccccccccccccccc} 
			\toprule
			\multicolumn{1}{c}{Instance} & 
			\multicolumn{1}{c}{$\mathcal{CTR}3$} & 
			\multicolumn{1}{C{1.5cm}}{Ewbank~{et al.}  2019 \cite{ewbank2019capacitated}} & 
			\multicolumn{1}{C{1.5cm}}{Ewbank~{et al.} 2016 \cite{ewbank2016unsupervised}} &
			\multicolumn{1}{C{1.5cm}}{Lysgaard~{et al.} \cite{lysgaard2004new}}&
			\multicolumn{1}{c}{Instance}& 
			\multicolumn{1}{c}{$\mathcal{CTR}3$}& 
			\multicolumn{1}{C{1.5cm}}{Ewbank~{et al.} 2019 \cite{ewbank2019capacitated}}& 
			\multicolumn{1}{C{1.5cm}}{Ewbank~{et al.} 2016 \cite{ewbank2016unsupervised}} & 
			\multicolumn{1}{C{1.5cm}}{Lysgaard~{et al.} \cite{lysgaard2004new}}
			%&\multicolumn{1}{c}{} 
			\\
			\midrule		
			A-n32-k5 & 2.90 & 3.57& 16.19& 14.44& B-n41-k6 & 9.24& 4.72& 17.35& 31.31 \\ 
			A-n33-k5 & 2.62& 4.51 & 14.95& 18.77 &B-n43-k6 &4.90 & 5.25& 16.90& 77.84 \\
			A-n33-k6 & 3.16& 5.52& 16.55 & 27.56& B-n44-k7 & 3.39 & 5.52& 17.69 & 8.15\\
			A-n34-k5 & 2.25 & 4.70& 14.62& 19.07 & B-n50-k7 &7.70& 6.01 &22.12 &9.86\\
			A-n36-k5& 2.25& 3.92& 17.89& 34.13 & B-n52-k7 & 13.23 &5.87 &21.42& 24.25\\
			A-n37-k5 &3.65 & 4.31 & 14.17& 20.72 & B-n63-k10 & 33.29& 9.70 &36.16 &1783.43\\ 
			A-n37-k6 &3.65 & 5.88 & 16.86 & 316.46  &E-n22-k4 & 1.06 & 3.13& 7.49& 1.70\\
			A-n38-k5& 2.56 &  4.89 & 14.66 & 65.73& E-n23-k3 & 1.37 & 1.37& 2.41& 6.18\\
			A-n39-k5& 5.83 & 4.91& 14.83& 91.31& E-n30-k3& 1.77 & 2.49 & 7.13 & 14.8\\
			A-n39-k6 & 6.71 & 6.79& 17.11& 69.55 &E-n33-k4&  2.04& 3.35& 9.39& 15.82 \\ 
			A-n44-k6 & 13.06 & 6.71 & 18.7 & 243.22 & E-n51-k5& 14.04& 7.32& 16.55& 41.69\\ 
			A-n48-k7 & 17.66 & 9.11& 18.08 &148.40& E-n76-k7& 240.13 & 16& 31.26 & 8703.55\\
			A-n53-k7 & 25.15 & 11.6 & 21.51 & 131.11  &P-n20-k2& 1.04 & 2.12& 3.31 & 13.69\\ 
			A-n54-k7 & 26.57 & 8.47 & 20.67 & 1672.44 &P-n21-k2 & 1.09 & 2.05 & 3.30& 2.62\\
			A-n63-k10 & 51.60 & 13.94 & 34.67& 3976.79 &P-n22-k2 &1.19 & 1.87 & 3.81 &15.31 \\ 
			A-n64-k9 & 53.42 &16.53 &31.55 &3643.50 &P-n40-k5& 10.81 & 9.10 &13.83 &17.79 \\ 
			A-n69-k9 &112.11 & 12.41 &32.31 & 3268.00&P-n45-k5 & 14.24 & 7.68 & 13.96 &45.73\\ 
			A-n80-k10 & 196.61& 17.81& 42.73& 1973.50&P-n50-k7& 20.55& 13.69& 21.29 &290.17 \\ 
			B-n34-k5 & 2.42 & 3.75& 11.25 &30.78 &P-n55-k10 & 28.96& 10.79 &21.32& 2534.21  \\
			B-n35-k5 &  2.27 & 3.95& 12.64 & 3.97&P-n76-k4 & 90.56 & 6.05 &19.36 &195.53\\
			B-n38-k6 & 5.25 &4.98& 12.84 &30.27 &P-n76-k5 & 126.62 & 7.43 & 23.96 & 1622.78\\
			B-n39-k5 &  3.57& 3.97 & 12.43 & 8.37 & P-n101-k4 & 222.45& 6.83 &25.9 &155.92 \\
			\bottomrule
		\end{tabular}
%	}
	\caption{Comparison between $\mathcal{CTR}3$ results and baselines from the literature in terms of the runtime.}
	\label{table-runtime}
\end{table} 

%\clearpage
%\newpage

After assessing the global performance of the proposed approach, in what follows, we will emphasize the appropriateness of the components used within the main proposed approach, specifically: centroids multi-start initialization, customized assignment metric, re-optimization process.

\subsection{Impact of centroids multi-start initialization} \label{multi-start-impact}
To assess the impact of the multi-start process for the centroids initialization within $\mathcal{CCBC}$, we first select a representative instance from each group $A, B, P$, namely : \textit{A-n53-k7, B-n68-k9, P-n76-k5}. Then, we run the proposed approach on each instance  while systematically varying the number of explored initial centroids combinations $\mathcal{N}_{it}$ in the $\mathcal{CCBC}$ step. Figure \ref{multi-start} reports the corresponding relative gap $\mathcal{GAP}_r$ as we incrementally explore a range of combinations. We observe that $\mathcal{GAP}_r$ decreases for all studied instances as long as $\mathcal{N}_{it}$ increases.
\begin{figure}[!htb]
	\centering
	\begin{minipage}{.5\textwidth}
		\centering
		 \includegraphics[width=1\linewidth]{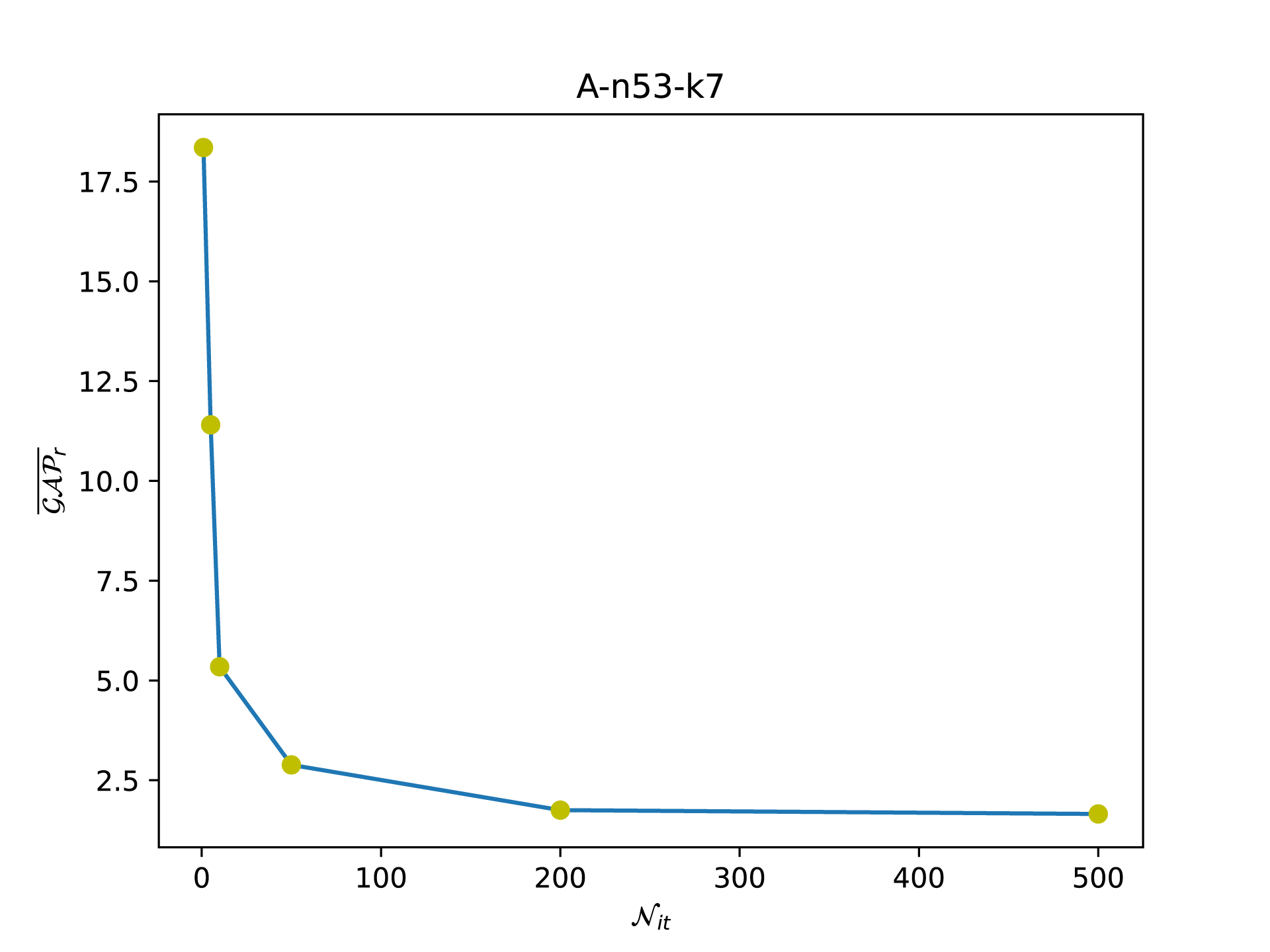}
	\end{minipage}%
	\begin{minipage}{0.5\textwidth}
		\centering
		 \includegraphics[width=1\linewidth]{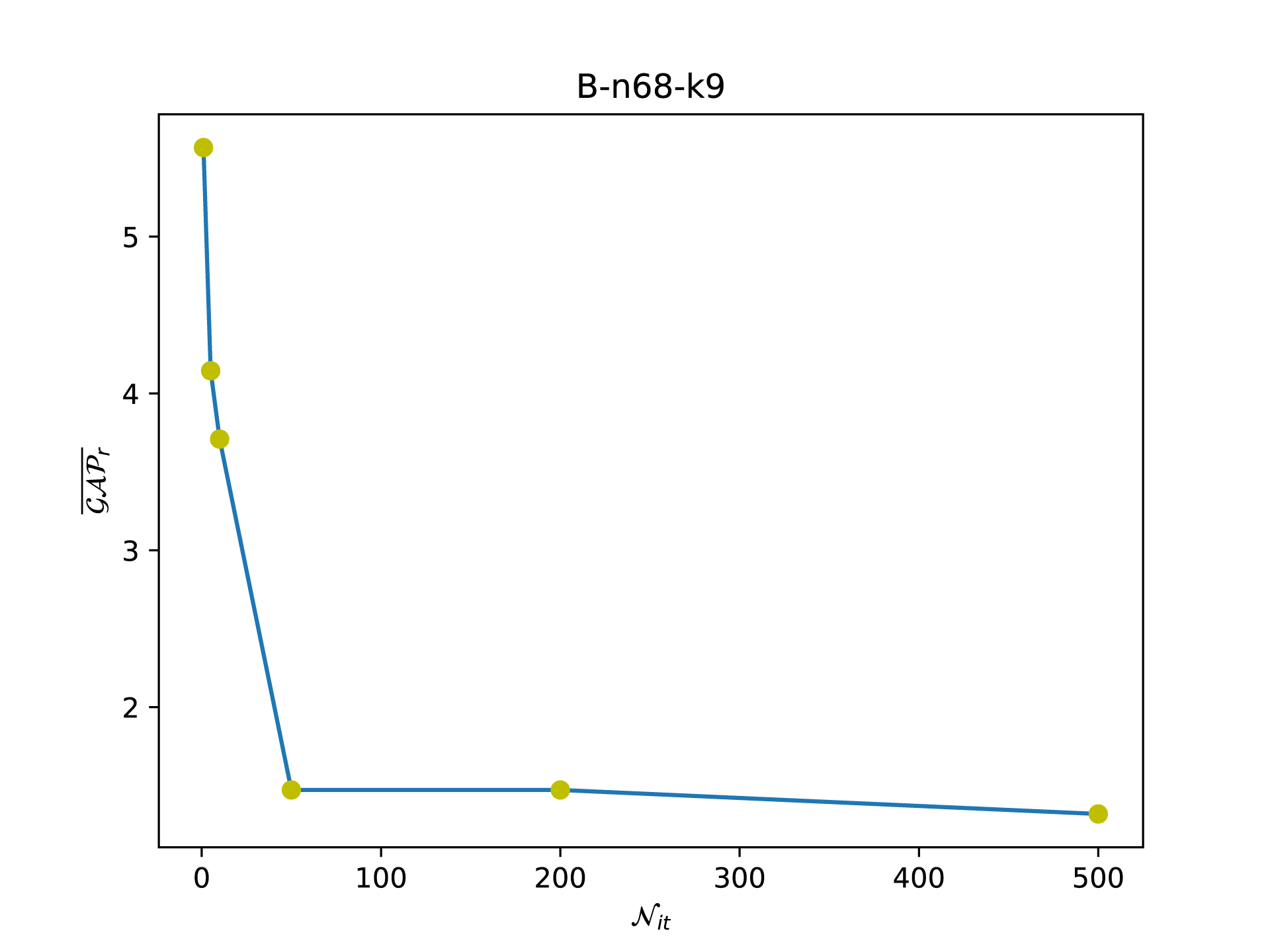}
	\end{minipage}
	\begin{minipage}{0.5\textwidth}
		\centering
		 \includegraphics[width=1\linewidth]{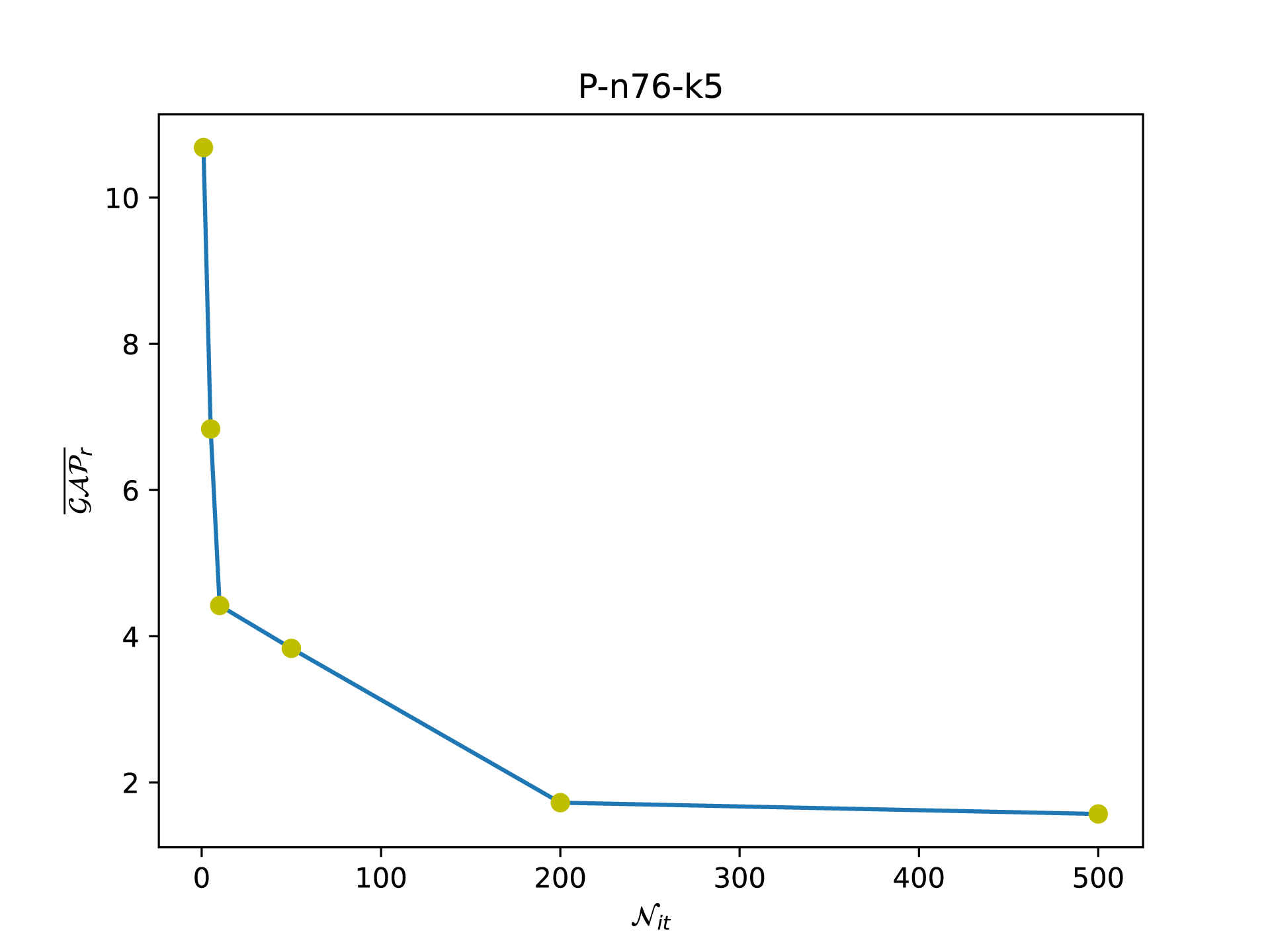}
	\end{minipage}
	\caption{Values of $\mathcal{GAP}_r$ varying the number of explored initial centroids combinations $\mathcal{N}_{it}$.}
	\label{multi-start}
\end{figure}

Furthermore, we provide a comparative study between the used initialization methodology and the well-known ones in machine learning, namely: \textit{Kmeans ++} and \textit{Naive sharding}. Specifically, we analyze the performance in terms of the relative gap $\overline{\mathcal{GAP}_r}$ for each initialization method per group. It should be noted that the comparison is carried out on instances from three groups $A, B, P$. Figure~\ref{initilzation} gives an overview on the $\overline{\mathcal{GAP}_r}$  results on average for each group using the three methodologies. We can notice that the multi-start initialization methodology consistently outperforms other ones in overage. 

In more detail, the average gap is above 7.5 \% for all groups when it comes to \textit{kmeans ++} or \textit{Naive sharding} initialization. Conversely, the same metric does not exceed 1.3 \% for any of the groups using a random multi-start initialization. This discrepancy with regard to the performance between the three approaches could be justified by the fact that \textit{kmeans ++} and \textit{Naive sharding} tend to select centroids in the same region, leading to convergence at a local optimum. In contrast, the multi-start approach ensures exploring diverse centroids combinations in the search space, therefore reaching a near-optimal solution for both the clustering step and then for $\mathcal{CVRP}$.  
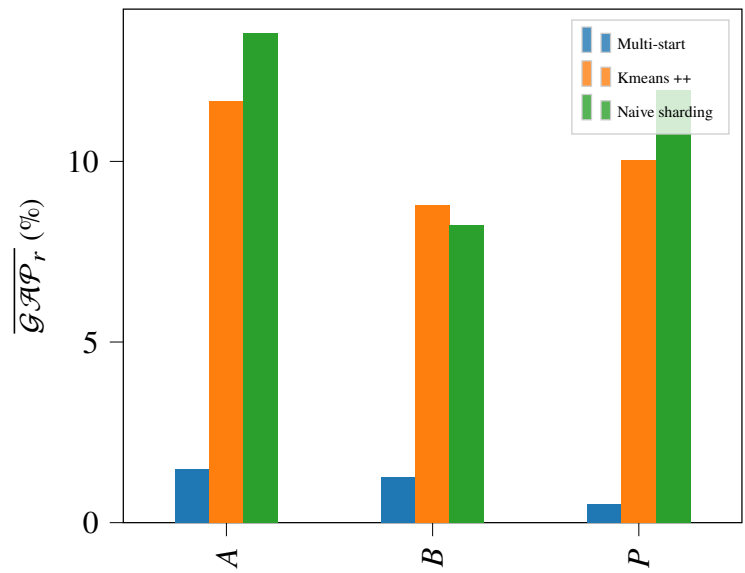
\begin{figure}[H]
	\begin{tikzpicture} [scale=1.2]
		
		\definecolor{darkgray176}{RGB}{176,176,176}
		\definecolor{darkorange25512714}{RGB}{255,127,14}
		\definecolor{forestgreen4416044}{RGB}{44,160,44}
		\definecolor{lightgray204}{RGB}{204,204,204}
		\definecolor{steelblue31119180}{RGB}{31,119,180}
		
		\begin{axis}[
			legend cell align={left},
			legend style={fill opacity=0.8, draw opacity=1, text opacity=1, draw=lightgray204},
			tick align=outside,
			tick pos=left,
			title={},
			x grid style={darkgray176},
			xlabel={},
			xmin=-0.5, xmax=2.5,
			xtick style={color=black},
			xtick={0,1,2},
			xticklabel style={rotate=90.0},
			xticklabels={$A$, $B$, $P$},
			y grid style={darkgray176},
			ylabel={\small{$\overline{\mathcal{GAP}_r}$ (\%)}},
			ymin=0, ymax=14.2147661090229,
			ytick style={color=black}
			]
			\draw[draw=none,fill=steelblue31119180] (axis cs:-0.25,0) rectangle (axis cs:-0.0833333333333333,1.46748797178297);
			\addlegendimage{ybar,ybar legend,draw=none,fill=steelblue31119180}
			\addlegendentry{\tiny{Multi-start}}
			
			\draw[draw=none,fill=steelblue31119180] (axis cs:0.75,0) rectangle (axis cs:0.916666666666667,1.2535949149035);
			\draw[draw=none,fill=steelblue31119180] (axis cs:1.75,0) rectangle (axis cs:1.91666666666667,0.501771638776899);
			\draw[draw=none,fill=darkorange25512714] (axis cs:-0.0833333333333333,0) rectangle (axis cs:0.0833333333333333,11.6730020471822);
			\addlegendimage{ybar,ybar legend,draw=none,fill=darkorange25512714}
			\addlegendentry{\tiny{Kmeans ++}}
			
			\draw[draw=none,fill=darkorange25512714] (axis cs:0.916666666666667,0) rectangle (axis cs:1.08333333333333,8.79954219453112);
			\draw[draw=none,fill=darkorange25512714] (axis cs:1.91666666666667,0) rectangle (axis cs:2.08333333333333,10.0452383208413);
			\draw[draw=none,fill=forestgreen4416044] (axis cs:0.0833333333333333,0) rectangle (axis cs:0.25,13.5378724847837);
			\addlegendimage{ybar,ybar legend,draw=none,fill=forestgreen4416044}
			\addlegendentry{\tiny{Naive sharding}}
			
			\draw[draw=none,fill=forestgreen4416044] (axis cs:1.08333333333333,0) rectangle (axis cs:1.25,8.2397087735331);
			\draw[draw=none,fill=forestgreen4416044] (axis cs:2.08333333333333,0) rectangle (axis cs:2.25,11.9702983018695);
		\end{axis}
		
	\end{tikzpicture}
	\caption{Comparison in terms of $\overline{\mathcal{GAP}_r}$ depending on the initialization methodology.}
	\label{initilzation}
\end{figure}

\subsection{Impact of assignment metric} \label{am-impact}
This section aims at highlighting the impact of the used metric to assign customers to clusters when compared to the classical assignment metric. This latter refers to the euclidean distance to assign nearest customers to clusters, i.e, \textit{K-means}. As illustrated, through Figure \ref{assignment}, choosing the assignment metric affects directly $\mathcal{CVRP}$ results. In detail, relying on a customized assignment metric provides better results than using the classical assignment metric. This performance concerns all groups as shown in this figure. The customized assignment metric is calculated using this formula $\mathcal{AM}(\mathscr{c}_i,S_k)=\frac{q_{i} }{d( \mathscr{c}_i,\mu_k^c)}$. It is designed in order to prioritize assigning nearest customers with high demands to the clusters. 

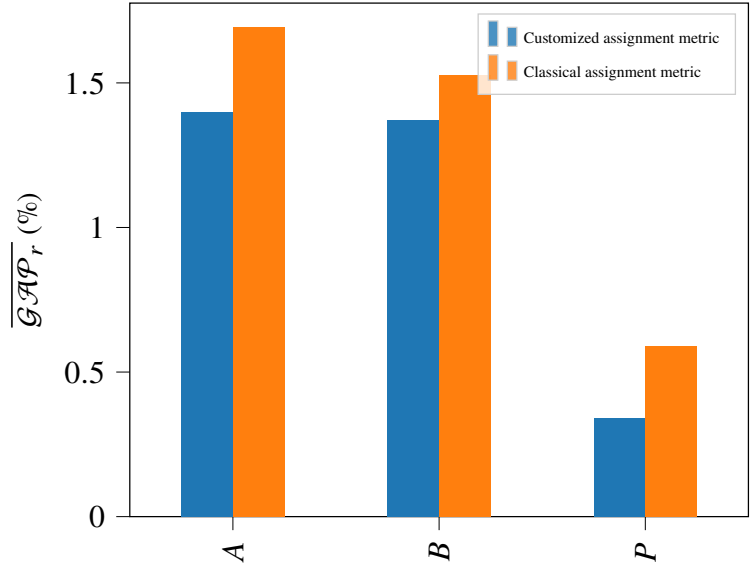
\begin{figure}[h!]
	\begin{tikzpicture} [scale=1.2]
		
		\definecolor{darkgray176}{RGB}{176,176,176}
		\definecolor{darkorange25512714}{RGB}{255,127,14}
		\definecolor{lightgray204}{RGB}{204,204,204}
		\definecolor{steelblue31119180}{RGB}{31,119,180}
		
		\begin{axis}[
			legend cell align={left},
			legend style={fill opacity=0.8, draw opacity=1, text opacity=1, draw=lightgray204},
			tick align=outside,
			tick pos=left,
			title={},
			x grid style={darkgray176},
			xlabel={},
			xmin=-0.5, xmax=2.5,
			xtick style={color=black},
			xtick={0,1,2},
			xticklabel style={rotate=90.0},
			xticklabels={$A$, $B$, $P$},
			y grid style={darkgray176},
			ylabel={\small{$\overline{\mathcal{GAP}_r}$ (\%)}},
			ymin=0, ymax=1.77600466616705,
			ytick style={color=black}
			]
			\draw[draw=none,fill=steelblue31119180] (axis cs:-0.25,0) rectangle (axis cs:0,1.3972569488454);
			\addlegendimage{ybar,ybar legend,draw=none,fill=steelblue31119180}
			\addlegendentry{\tiny{Customized assignment metric}}
			
			\draw[draw=none,fill=steelblue31119180] (axis cs:0.75,0) rectangle (axis cs:1,1.3700851759552);
			\draw[draw=none,fill=steelblue31119180] (axis cs:1.75,0) rectangle (axis cs:2,0.340789500481697);
			\draw[draw=none,fill=darkorange25512714] (axis cs:0,0) rectangle (axis cs:0.25,1.69143301539719);
			\addlegendimage{ybar,ybar legend,draw=none,fill=darkorange25512714}
			\addlegendentry{\tiny{Classical assignment metric}}
			
			\draw[draw=none,fill=darkorange25512714] (axis cs:1,0) rectangle (axis cs:1.25,1.52619123547446);
			\draw[draw=none,fill=darkorange25512714] (axis cs:2,0) rectangle (axis cs:2.25,0.589461758459689);
		\end{axis}
		
	\end{tikzpicture}
	\caption{Comparison in terms of $\overline{\mathcal{GAP}_r}$ depending on the used assignment metric.}
	\label{assignment}
\end{figure}

To gain deeper insights about the role of this customized assignment metric in enhancing the $\mathcal{CVRP }$ quality solutions, we propose to conduct an experiment to compare the vehicle fulfillment rate achieved $\textit{VF}$(\%) through using the classical assignment metric against the proposed customized metric. The results obtained from this experiment are highlighted in Figure \ref{2 figures} in the right. It presents the average of unfilled capacity inside the vehicles $\overline{\mathcal{GAP}_{r}^{VF}}$ for groups \textit{A, B, P} and for every assignment metrics. It is computed for a specific instance using the following formula: $\mathcal{GAP}_{r}^{VF}=\frac{K\times Q-\sum_{i=1}^N q_i}{K\times Q} \times 100$. As one can clearly notice that relying on a customized metric consistently guarantees a higher fulfillment rate for the vehicles in the $\mathcal{CVRP}$ context. These findings confirm that the enhanced vehicles capacity utilization can be achieved by adopting the previous metric when compared to the traditional one.

Vehicles fulfillment rate has a direct impact on the number of used vehicles when addressing $\mathcal{CVRP}$ instances. Thoroughly 
a higher fulfillment rate inherently leads to a reduced number of vehicles required to efficiently serve all customers. To illustrate this fact and how it variates depending on the used assignment metric, we perform another experiment to evaluate the number of vehicles depending on the assignment metric. Figure \ref{2 figures} in the left gives the number of instances $N_{K-K_{opt}}$ that are solved using a number of $K$ vehicles knowing that the number of vehicles used by the optimal solution is $K_{opt}$. In our case, we represent the number of instances with respect to $K-K_{opt} \in \{0,1,2,3,4\}$. This right Figure \ref{2 figures} shows that using the customized assignment metric guarantees a number of used vehicles near to the optimal number. 

\begin{figure}[H]
	\centering
	\begin{minipage}{.2\textwidth}
		\begin{tikzpicture}[scale=0.9]
			\definecolor{darkgray176}{RGB}{176,176,176}
			\definecolor{darkorange25512714}{RGB}{255,127,14}
			\definecolor{lightgray204}{RGB}{204,204,204}
			\definecolor{steelblue31119180}{RGB}{31,119,180}
			
			\begin{axis}[
				legend cell align={left},
				legend style={fill opacity=0.8, draw opacity=1, text opacity=1, draw=lightgray204},
				tick align=outside,
				tick pos=left,
				title={},
				x grid style={darkgray176},
				xlabel={},
				xmin=-0.5, xmax=2.5,
				xtick style={color=black},
				xtick={0,1,2},
				xticklabel style={rotate=90.0},
				xticklabels={$A$, $B$, $P$},
				y grid style={darkgray176},
				ylabel={$\overline{\mathcal{GAP}_{r}^{VF}}$(\%)},
				ymin=0, ymax=16.6606313131313,
				ytick style={color=black}
				]
				\draw[draw=none,fill=steelblue31119180] (axis cs:-0.25,0) rectangle (axis cs:0,9.28561140505585);
				\addlegendimage{ybar,ybar legend,draw=none,fill=steelblue31119180}
				\addlegendentry{\tiny{Customized assignment metric }}
				
				\draw[draw=none,fill=steelblue31119180] (axis cs:0.75,0) rectangle (axis cs:1,10.5404416839199);
				\draw[draw=none,fill=steelblue31119180] (axis cs:1.75,0) rectangle (axis cs:2,11.6917798294405);
				\draw[draw=none,fill=darkorange25512714] (axis cs:0,0) rectangle (axis cs:0.25,15.8672679172679);
				\addlegendimage{ybar,ybar legend,draw=none,fill=darkorange25512714}
				\addlegendentry{\tiny{Classical assignment metric}}
				
				\draw[draw=none,fill=darkorange25512714] (axis cs:1,0) rectangle (axis cs:1.25,14.5769464834682);
				\draw[draw=none,fill=darkorange25512714] (axis cs:2,0) rectangle (axis cs:2.25,14.8167525711595);
			\end{axis}
		\end{tikzpicture}
		\label{truck-load}
		
	\end{minipage}
%	\hspace{120pt}
	\hfill
	\begin{minipage}{.5\textwidth}
		\begin{tikzpicture}[scale=0.9]
			
			\definecolor{darkgray176}{RGB}{176,176,176}
			\definecolor{darkorange25512714}{RGB}{255,127,14}
			\definecolor{lightgray204}{RGB}{204,204,204}
			\definecolor{steelblue31119180}{RGB}{31,119,180}
			
			\begin{axis}[
				legend cell align={left},
				legend style={fill opacity=0.8, draw opacity=1, text opacity=1, draw=lightgray204},
				tick align=outside,
				tick pos=left,
				x grid style={darkgray176},
				xlabel={$K-K_{opt}$: gap with the optimal},
				xmin=-0.5, xmax=4.5,
				xtick style={color=black},
				xticklabel style={rotate=90.0},
				y grid style={darkgray176},
				ylabel={$N_{K-K_{opt}}$},
				ymin=0, ymax=55.65,
				ytick style={color=black}
				]
				\draw[draw=none,fill=steelblue31119180] (axis cs:-0.25,0) rectangle (axis cs:0,53);
				\addlegendimage{ybar,ybar legend,draw=none,fill=steelblue31119180}
				\addlegendentry{\tiny{Customized assignment metric}}
				
				\draw[draw=none,fill=steelblue31119180] (axis cs:0.75,0) rectangle (axis cs:1,14);
				\draw[draw=none,fill=steelblue31119180] (axis cs:1.75,0) rectangle (axis cs:2,0);
				\draw[draw=none,fill=steelblue31119180] (axis cs:2.75,0) rectangle (axis cs:3,3);
				\draw[draw=none,fill=steelblue31119180] (axis cs:3.75,0) rectangle (axis cs:4,0);
				\draw[draw=none,fill=darkorange25512714] (axis cs:0,0) rectangle (axis cs:0.25,29);
				\addlegendimage{ybar,ybar legend,draw=none,fill=darkorange25512714}
				\addlegendentry{\tiny{Classical assignement metric}}
				
				\draw[draw=none,fill=darkorange25512714] (axis cs:1,0) rectangle (axis cs:1.25,33);
				\draw[draw=none,fill=darkorange25512714] (axis cs:2,0) rectangle (axis cs:2.25,5);
				\draw[draw=none,fill=darkorange25512714] (axis cs:3,0) rectangle (axis cs:3.25,2);
				\draw[draw=none,fill=darkorange25512714] (axis cs:4,0) rectangle (axis cs:4.25,1);
			\end{axis}
		\end{tikzpicture}
	\end{minipage}
	\caption{Comparison in terms of $\overline{\mathcal{GAP}_r^{VF}}$ and $N_{K-K_{opt}}$ depending on the initialization methodology.}
	\label{2 figures}
\end{figure}
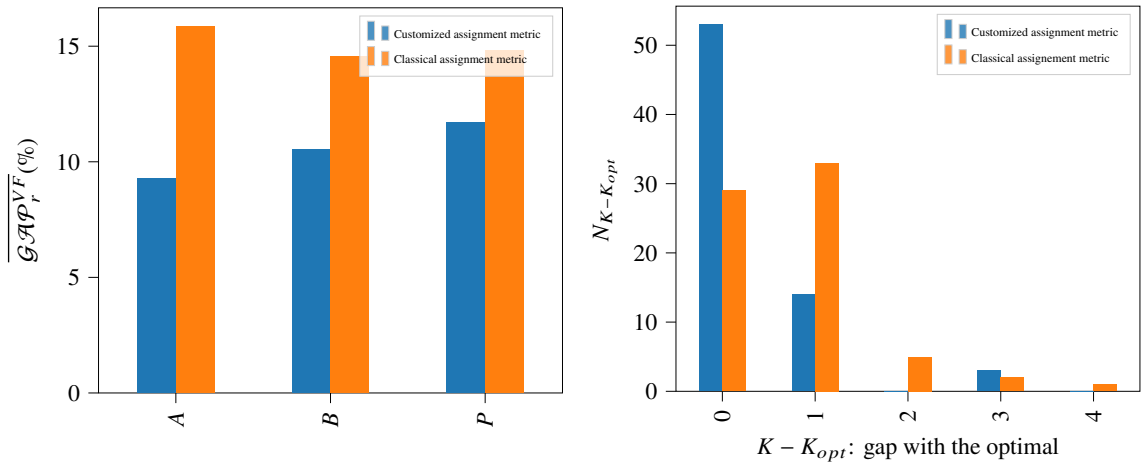 

\subsection{Impact of the third step of \texorpdfstring{$\mathcal{CTR}3$}{}} \label{reopt-impact}
To better understand the relevance of adding the ruin \& recreate step, we report in Figure \ref{reopt} the comparison results obtained using the proposed approach with the ruin \& recreate step(\textit{3-steps approach}) and without (\textit{2-steps approach}). One can clearly notice that using this last step through incorporating a ruining and recreating process significantly improves the solution quality in terms of the relative gap $\mathcal{GAP}_r$ with the optimal solution. This remarkable performance holds the same across various instances from all groups \textit{A, B, P} .

The clustering step is carried out through a heuristic method. Consequently, the existence of misclustered customers cannot be completely ruled out. It should be noted that even a solitary misplaced customer can substantially change the $\mathcal{CVRP}$ solution. To address this concern, the ruining and recreating mechanism was implemented to reassign these misclustered customers to the appropriate clusters, thereby enhancing the overall $\mathcal{CVRP}$ solution quality.
% This file was created with tikzplotlib v0.10.1

\begin{figure} [H]
	\begin{tikzpicture} [scale=1.2]
		
		\definecolor{darkgray176}{RGB}{176,176,176}
		\definecolor{darkorange25512714}{RGB}{255,127,14}
		\definecolor{lightgray204}{RGB}{204,204,204}
		\definecolor{steelblue31119180}{RGB}{31,119,180}
		
		\begin{axis}[
			legend cell align={left},
			legend style={fill opacity=0.8, draw opacity=1, text opacity=1, draw=lightgray204},
			tick align=outside,
			tick pos=left,
			title={},
			x grid style={darkgray176},
			xlabel={},
			xmin=-0.5, xmax=2.5,
			xtick style={color=black},
			xtick={0,1,2},
			xticklabel style={rotate=90.0},
			xticklabels={$A$, $B$, $P$},
			y grid style={darkgray176},
			ylabel={\small{$\overline{\mathcal{GAP}_{r}}$ (\%)}},
			ymin=0, ymax=3.15264706465791,
			ytick style={color=black}
			]
			\draw[draw=none,fill=steelblue31119180] (axis cs:-0.25,0) rectangle (axis cs:0,1.42143847781855);
			\addlegendimage{ybar,ybar legend,draw=none,fill=steelblue31119180}
			\addlegendentry{\tiny{3-steps approach}}
			
			\draw[draw=none,fill=steelblue31119180] (axis cs:0.75,0) rectangle (axis cs:1,1.35918857344465);
			\draw[draw=none,fill=steelblue31119180] (axis cs:1.75,0) rectangle (axis cs:2,0.344235859294362);
			\draw[draw=none,fill=darkorange25512714] (axis cs:0,0) rectangle (axis cs:0.25,3.00252101395992);
			\addlegendimage{ybar,ybar legend,draw=none,fill=darkorange25512714}
			\addlegendentry{\tiny{2-steps approach}}
			
			\draw[draw=none,fill=darkorange25512714] (axis cs:1,0) rectangle (axis cs:1.25,2.11442416060008);
			\draw[draw=none,fill=darkorange25512714] (axis cs:2,0) rectangle (axis cs:2.25,1.67840985867);
		\end{axis}
	\end{tikzpicture}
	\caption{Comparison in terms of $\overline{\mathcal{GAP}_r}$ using the proposed approach with and without the third step.}
	\label{reopt}
\end{figure}
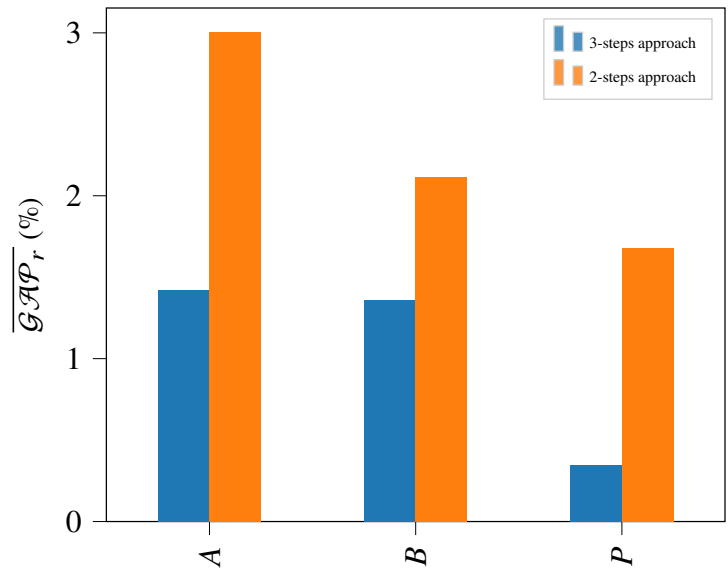

%\subsection{Best number of clusters }
% This file was created with tikzplotlib v0.10.1.

\section{Conclusion} \label{concl}
In conclusion, the goal of this study is to narrow the gap in understanding the connection between $\mathcal{CVRP}$ and $\mathcal{CCBC}$. Our findings demonstrate that optimal solutions to $\mathcal{CCBC}$ can offer valuable insights into solving $\mathcal{CVRP}$. The experimental results, corroborated by theoretical analysis, indicate a strong correlation between the optimal solutions of these two problems. This research paves the way for more efficient and practical approaches to solving the vehicle routing problems, leveraging the principles of centroid-based clustering. Future endeavors will involve delving deeper into the theoretical aspects of the clustering centroids regions leading to optimal or near-optimal solutions for
$\mathcal{CVRP}$. This includes a rigorous mathematical characterization of these regions, investigating properties such as their openness, convexity, connectedness. Additionally, we plan to explore the feasibility of reaching these regions while clustering through a reinforcement learning approach. This work will entail a dynamic evaluation of $\mathcal{CVRP}$ solutions in conjunction with the clustering process to effectively guide the clustering centroids selection.
 
%\bibliographystyle{plain}
%\bibliography{biblio}

\end{document}